\newcommand\Z{{\mathbb Z}}
\newtheorem{theorem}{Theorem}[section]
\newtheorem{lemma}[theorem]{Lemma}
\newtheorem{proposition}[theorem]{Proposition}
\newtheorem{definition}[theorem]{Definition}
\newtheorem{remark}[theorem]{Remark}
\newcommand{\cal}{\mathcal}
\DeclareMathOperator*{\Motimes}{\text{\raisebox{0.25ex}{\scalebox{0.7}{$\bigotimes$}}}}
\begin{document}

\title{Yang-Baxter Solutions from Categorical Augmented Racks} % for Braided Frobenius Algebras}

\author{Masahico Saito} 
\address{Department of Mathematics and Statistics, 
	University of South Florida, Tampa, FL 33620, U.S.A.} 
\email{saito@usf.edu} 

\author{Emanuele Zappala} 
\address{Department of Mathematics and Statistics, Idaho State University\\
	Physical Science Complex |  921 S. 8th Ave., Stop 8085 | Pocatello, ID 83209} 
\email{emanuelezappala@isu.edu}

\begin{abstract}
An augmented rack is a set with a self-distributive binary operation induced by a group action, and has been extensively used in knot theory. Solutions to the Yang-Baxter equation (YBE)
have been also used for knots, since the discovery of the Jones polynomial. In this paper, an interpretation of augmented racks in tensor categories is given for coalgebras that are Hopf algebra modules, and associated solutions to the YBE are constructed. 
Explicit constructions are given using quantum heaps and the adjoint of Hopf algebras. 
Furthermore,  an inductive construction of Yang-Baxter solutions is given by means of the categorical augmented racks, yielding infinite families of solutions.
Constructions of braided monoidal categories are also provided using categorical augmented racks.
\end{abstract}

\maketitle

\date{\empty}

\tableofcontents

\section{Introduction}

The {\it Yang-Baxter equation} (YBE) has played a significant role in physics and knot theory.
In knot theory, since the discovery of the Jones polynomial, its relation to YBE lead to wide range
of knot invariants, opening a new era of quantum topology.
Constructing solutions to YBE has been an important branch in this development.
Theory of quantum groups \cite{ChariPressley} played a key role here.
Specifically, for a unital ring ${\mathbb k}$, a ${\mathbb k}$-module $X$
and an  invertible map $R: X \otimes X \rightarrow X \otimes X$,
the YBE is defined by 
$$(R \otimes {\mathbb 1}) ({\mathbb 1} \otimes R) (R \otimes {\mathbb 1}) 
=({\mathbb 1} \otimes R) (R \otimes {\mathbb 1}) ({\mathbb 1} \otimes R) ,$$
where ${\mathbb 1}$ denotes the identity map. 
A solution to YBE is also called a Yang-Baxter (YB) operator or R-matrix.
A diagrammatic representation of YBE is depicted in Figure~\ref{braiding}.
Diagrams are read from top to bottom. A straight line represents the identity ${\mathbb 1}$, each  crossing represents the map $R$, and the horizontal juxtaposition represents the tensor product of maps, 
so that the top left crossing with a straight line to its right represents $R \otimes {\mathbb 1}$. 
These diagrams also represent a braid relation 
and the Reidemeister type III move.

\begin{figure}[htb]
\begin{center}
\begin{overpic}[width=1.5in]{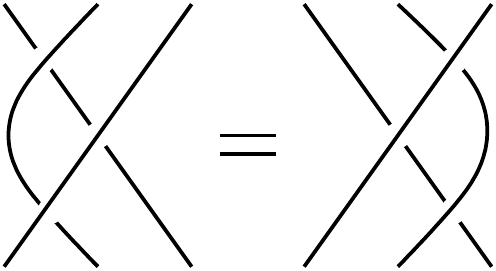}
	\put(-35,38){$R \otimes {\mathbb 1}$}
		\put(-35,23){$  {\mathbb 1} \otimes R $}
			\put(-35,8){$R \otimes {\mathbb 1}$}
	\put(109,38){$  {\mathbb 1} \otimes R $}		
	\put(109,23){$R \otimes {\mathbb 1}$}	
	\put(109,8){$  {\mathbb 1} \otimes R $}
\end{overpic}
\end{center}
\caption{Diagrams for the Yang-Baxter equation}
\label{braiding}
\end{figure}

The discrete case of the set-theoretic YBE also played a key role in knot theory.
Self-distributive operations called racks and quandles \cites{Joyce82,Matveev82} and their cohomology theories
\cites{FR,CJKLS} have been extensively studied for constructions of invariants for knots and knotted surfaces, producing significant novel applications in knot theory.
A rack operation can be used to construct solutions to the set-theoretic YBE, that gives
representations of braid groups.
On the other hand, self-distributive structures in  coalgebra category have been 
formulated in \cite{CCES1}. 
Thus it is a natural attempt to apply this principle of constructing solutions of set-theoretic YBE by racks to  coalgebra categories.
This is done in the present article.

An important family or racks is represented by {\it augmented} racks, describing the operation in terms of group actions that are related to conjugation (see below). 
Although an interpretation of augmented rack in a tensor category was mentioned in \cite{ESZheap}
in relation to a ternary self-distributive operation called {\it heap}, %more basic case of 
a  definition for
binary case in tensor categories %in general  
and its applications have not been fully explored. 

In this paper we  provide a definition of  augmented racks
for coalgebras, 
and give constructions of solutions to YBE using them.
Three explicit examples are presented, using quantum heaps, adjoint of Hopf algebras, and doubling constructions.
Thus for these examples, novel constructions of YBE solutions are provided. 
Further generalizing the doubling constructions, we also provide a method of constructing a braided monoidal category using a family of YBE solutions resulting from  the categorical augmented rack structures.

Below we describe the organization  and further details of results of the paper.
Reviews of definitions and terminologies used in this paper are given in Section~\ref{sec:prelim}.
The definition of categorical augmented racks for coalagebras that are modules over Hopf algebras  is presented in Section~\ref{sec:CAR}. Using such a structure, self-distributive maps are constructed, and they are further utilized to 
obtain solutions to the YBE. 
In the  two sections  that follow,  explicit examples to which our approach can be applied are presented.
In Section~\ref{sec:heap}, these constructions are applied to the Hopf algebra version of a group heap.
In Section~\ref{sec:ad} the adjoint of Hopf algebras is used for a family of examples.
An inductive construction is provided in Section~\ref{sec:doubling}, that provides infinite solutions of YBE from one.
This inductive construction is strengthened  in Section~\ref{sec:BMC} to derive a braided monoidal category 
generated by one object, providing such categories from the explicit examples established in earlier sections.

\section{Preliminaries}\label{sec:prelim}

\subsection{Augmented racks}

A {\it rack}~\cite{FR} is a set $X$ with a binary operation $*$ such that 
$R_y: X \rightarrow X$ defined by $x*y=R_y(x)$ is an automorphism of $*$ for all $y \in X$.
This is equivalent to the conditions that $R_y$ is a bijection for all $y \in X$ and $*$ is (right) self-distributive (SD),
$$(x*y)*z=R_z (x*y)=R_z(x) * R_z(y)=(x*z)*(y*z)$$ for all $x,y,z \in X$.
A {\it quandle} is a rack $X$ that satisfies the idempotency, $x*x=x$ for all $x \in X$.

Typical examples of quandles  include the trivial quandle $x*y=x$ on any set, $n$-fold conjugations $x*y=y^{-n} x y$ on  groups,
the {\it core} quandle $x*y=y x^{-1} y$ on  groups, and {\it Alexander quandle}
$x*y=tx + (1-t)y$ on $\Z[t, t^{-1}]$ modules.
A {\it cyclic} rack $x*y=x+1$ on $\Z_n$ is a rack that is not a quandle. 

An {\it augmented rack}~\cite{FR} $(X, G)$ is a set $X$ with a right group action by  a group $G$ 
and a map $\nu: X \rightarrow G$ satisfying the identity
$\nu (x \cdot g) = g^{-1} \nu(x) g$ for all $x \in X$, $g \in G$.
An augmented rack has a rack operation defined by $x*y=x \cdot  \nu (y)$ for $x, y \in X$.
A group $G$ with the inner automorphism $\nu={\rm id}_G$, $x*y=x\cdot \nu(y)=y^{-1} x y$, $x,y \in G$,
gives an augmented rack.

A {\it homomorphism of augmented racks} $(X_1,G, \nu_1)$ and $(X_2,G, \nu_2)$ is a $G$-equivariant map $f: X_1 \rightarrow X_2$, i.e. $f(x\cdot g) = f(x)\cdot g$, satisfying the property that 
$\nu_2(f(x)) = \nu_1(x)$. An augmented rack homomorphism which is invertible through an augmented rack homomorphism is said to be an isomorphism.

\subsection{Hopf algebras}

A {\it Hopf algebra} $(X, \mu,  \eta,  \Delta, \epsilon, S)$ (a 
module over a unital ring 
$\mathbb k$,
multiplication, unit, comultiplication, counit, antipode,  respectively), is
defined as follows. 
First, recall that a bialgebra 
$X$  
is a module endowed with
a multiplication $\mu: X\otimes X\rightarrow X$ with unit $\eta$ and a comultiplication $\Delta: X\rightarrow X\otimes X$ with counit $\epsilon$ such that the compatibility condition 
%$\Delta \circ\mu = (\mu\otimes \mu)\tau\circ (\Delta\otimes \Delta)$ 
$$\Delta \circ  \mu = (\mu\otimes \mu)\circ ( {\mathbb 1}\otimes  \tau\otimes  {\mathbb 1}) \circ (\Delta\otimes \Delta)$$
holds,
where $\tau$ denotes the transposition $\tau(x \otimes y) = y\otimes x$ for simple tensors.  A Hopf algebra is a bialgebra endowed with a map $S: X\rightarrow X$, called {\it antipode}, satisfying the equations 
$$\mu \circ (\mathbb 1\otimes S)\circ \Delta = \eta \circ \epsilon = \mu\circ (S\otimes \mathbb 1)\circ\Delta,$$
 called the {\it antipode condition}.

The diagrammatic representation of the algebraic operations appearing in a Hopf algebra is given in Figure~\ref{hopfop}. 
The top two arcs 
of the trivalent vertex for multiplication $\mu$ (the leftmost diagram) represent $X \otimes X$, the vertex represents $\mu$, and the bottom arc represents $X$. 
In Figure~\ref{hopfaxiom} some of the defining axioms of a Hopf algebra are translated into diagrammatic equalities. Specifically, 
diagrams 
represent
(A) associativity of $\mu$, (B) unit condition, (C), compatibility between $\mu$ and $\Delta$, (D) the antipode condition. The coassociativity and  counit conditions are represented by diagrams that are vertical mirrors of (A) and (B), respectively. 

\begin{figure}[htb]
\begin{center}
\includegraphics[width=2.2in]{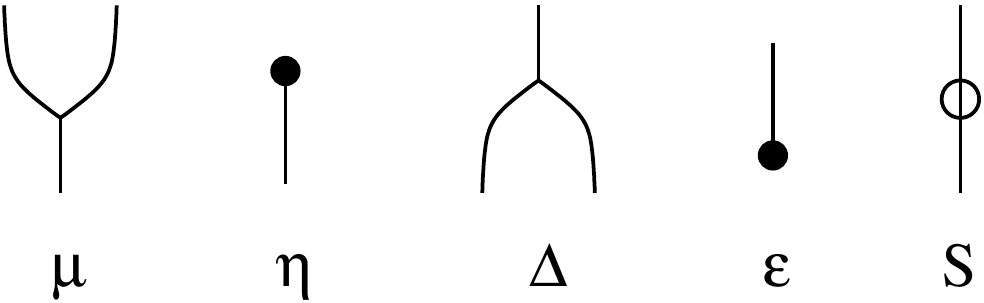}
\end{center}
\caption{Operations of Hopf algebras}
\label{hopfop}
\end{figure}

\begin{figure}[htb]
\begin{center}
\includegraphics[width=4.6in]{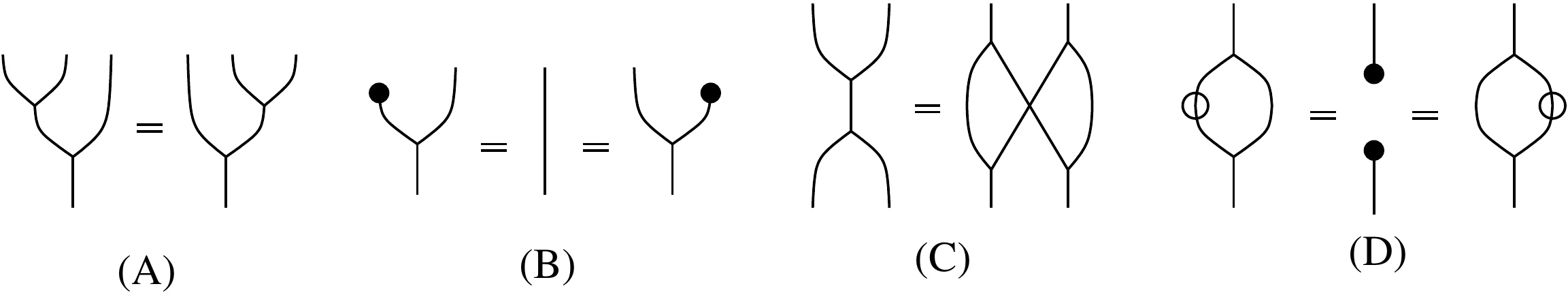}
\end{center}
\caption{Axioms of Hopf algebras}
\label{hopfaxiom}
\end{figure}

Any Hopf algebra satisfies the equality $\mu (S \otimes S) = S \mu  \tau $, where $\tau$ denotes the transposition $\tau(x \otimes y) = y\otimes x$ for simple tensors. 
This equality is depicted in Figure~\ref{mutwist}.
A similar equality holds for comultiplication, $ (S \otimes S) \Delta = \Delta S \tau$, represented by an upside down diagram.
A Hopf algebra is called {\it involutory} if $S^2={\mathbb 1}$, the identity. 

%It is known, \cite{Kas} Theorem~III.3.4, that if a Hopf algebra is commutative or cocommutative it follows that it is also involutory. In what follows, we will not mention that our Hopf algebras are involutory when they are (co)commutative, and freely apply the fact that $S^2={\mathbb 1}$ without further mention. 

For the comultiplication, we use Sweedler's notation $\Delta(x)=x^{(1)}\otimes x^{(2)}$ suppressing the summation. Further, we use
$( \Delta \otimes {\mathbb 1} ) \Delta (x) = ( x^{(11)} \otimes x^{(12)} ) \otimes x^{(2)}$ and
$( {\mathbb 1}  \otimes \Delta ) \Delta (x) =  x^{(1)} \otimes ( x^{(21)} \otimes x^{(22)} ) $,
both of which are also written as 
$ x^{(1)} \otimes x^{(2)}  \otimes x^{(3)}$ from the coassociativity. 

\begin{figure}[htb]
\begin{center}
\includegraphics[width=1in]{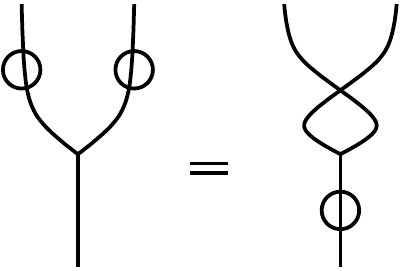}
\end{center}
\caption{Twisting $\mu$ with antipodes}
\label{mutwist}
\end{figure}

\section{Categorical augmented racks}\label{sec:CAR}

In this section we define categorical augmented racks, show that they can be equipped with categorical self-distributive maps, and provide associated R-matrices.

\begin{definition}\label{def:compati}
{\rm
Let $H$ be a Hopf algebra, and $X$ be a coalgebra that is a right $H$-module.
The comultiplication $\Delta_X$ of $X$ is said to be {\it compatible} with the module action 
$\cdot: X \otimes H \rightarrow X$ if they satisfy 
$\Delta_X ( x \cdot g)= (x^{(1)} \cdot g^{(1)} ) \otimes (x^{(2)} \cdot g^{(2)} ) $.
We also say, in this case, that the action is a coalgebra morphism.
}
\end{definition}

This condition is depicted in Figure~\ref{compati}. 
In the figure, thin black lines represent $H$ and thick blue lines represent $X$. 
The trivalent vertex where a black line merges to a blue line indicates the action.

\begin{figure}[htb]
\begin{center}
\includegraphics[width=1.3in]{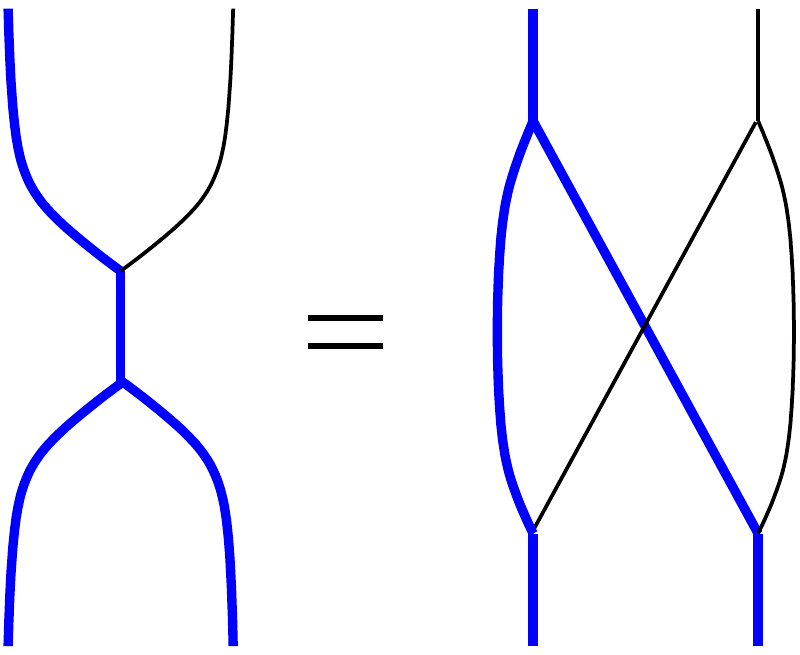}
\end{center}
\caption{Compatibility between action and comultiplication}
\label{compati}
\end{figure}

\begin{definition}\label{def:comm}
{\rm
Let $H$ be a Hopf algebra, and $X$ be a coalgebra that is a right $H$-module.
A linear map $f: X \rightarrow H$ is called {\it coalgebra  morphism} if 
it satisfies $\Delta_H f(x)= (f \otimes f)(\Delta_X (x))$ for all $x \in X$. 
}
\end{definition}

This condition is depicted in Figure~\ref{nucomult}, where the upside down triangle represents 
$f$. Later we will exclusively use the triangle notation for the augmentation map described below.

\begin{figure}[htb]
\begin{center}
\includegraphics[width=1.3in]{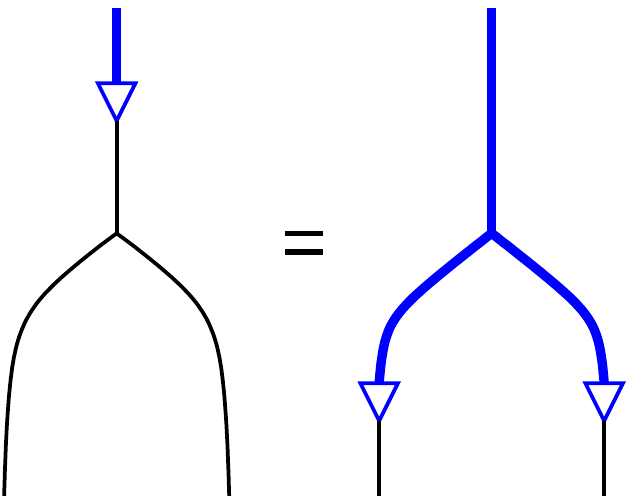}
\end{center}
\caption{Commutativity between $\nu$ and comultiplication}
\label{nucomult}
\end{figure}

\begin{definition}
{\rm
Let $H$ be a Hopf algebra, and $X$ be a coalgebra that is a right $H$-module, such that 
the action is compatible with the comultiplication $\Delta_X$ of $X$,
as diagrammatically depicted in Figure~\ref{compati}. An {\it augmentation map} $\nu: X \rightarrow H$ is a coalgebra morphism that satisfies
$\nu (x \cdot g)=S(g^{(1)} )\nu (x) g^{(2)}$ for all $x \in X$ and $g \in H$, where the juxtaposition on the right hand side represents the multiplication.
The diagrams for the left hand side and right hand side, respectively, 
of the above equality are depicted in Figure~\ref{aug} $(A)$ and $(B)$.
If there is an augmentation map $\nu$, $X$ is called {\it a categorical augmented rack}.
}
\end{definition}

Although the naming with {\it categorical } might suggest being defined for a general category,
we focus on coalgebra categories.

\begin{definition}
	{\rm 
			A homomorphism of categorical augmented racks $(X_1, H, \nu_1)$ and $(X_2, H, \nu_2)$ is an $H$-equivariant coalgebra morphism $f : X_1 \rightarrow X_2$, i.e. $f(x\cdot h_1) = f(x)\cdot h_2$, satisfying the property that 
			$\nu_2(f(x)) = \nu_1(x)$. A homomorphism that is invertible through a homomorphism of categorical augmented racks is an isomorphism. 
	}
\end{definition}

\begin{definition}[\cite{CCEScoalg}]
{\rm
Let $X$ be a coalgebra over ${\mathbb k}$.
A map $q: X \otimes X \rightarrow X$ is called {\it (categorically) self-distributive} (SD) if it satisfies
$$ q ( q \otimes {\mathbb 1} ) = 
q ( q \otimes q) ( {\mathbb 1} \otimes \tau \otimes  {\mathbb 1} ) ({\mathbb 1} \otimes {\mathbb 1} \otimes \Delta)
: X^{\otimes 3} \rightarrow X 
$$
where $\tau$ represents transposition on monomials as before, $\tau(x \otimes y ) = y \otimes x$.
}
\end{definition}

 Two SD maps $(X_1, q_1)$ and $(X_2, q_2)$ are called {\it equiovalent}
if there is a coalgebra homomorphism $f: X_1 \rightarrow X_2$ such that 
$q_2 f = f^{\otimes 3} q_1$. 

\begin{figure}[htb]
\begin{center}
\includegraphics[width=1.7in]{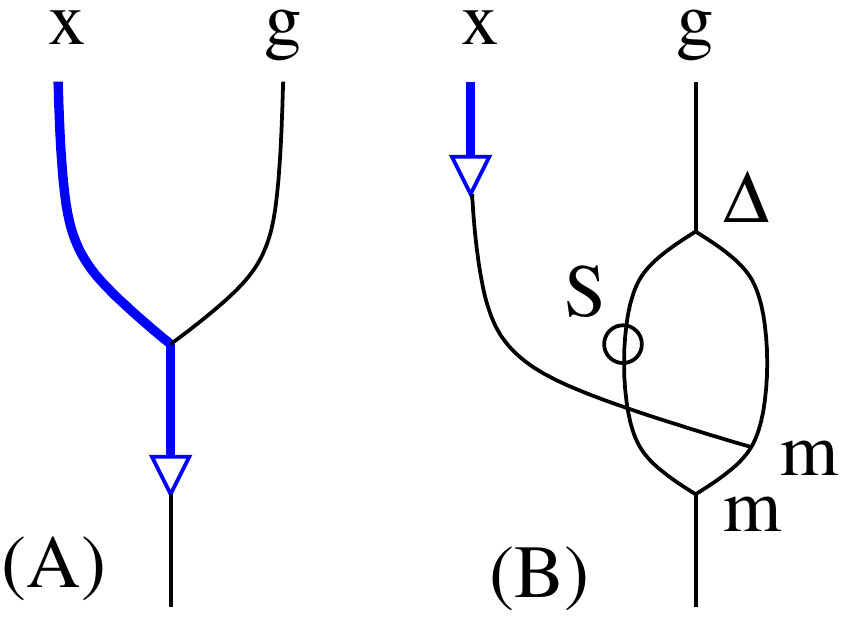}
\end{center}
\caption{Diagrams representing the definition of categorical augmented racks}
\label{aug}
\end{figure}

\begin{definition} \label{def:SD}
{\rm
Let $X$ be a categorical augmented rack over a Hopf algebra $H$ with the augmentation map $\nu$.
The  map $q: X \otimes X \rightarrow X$ defined by $q(x \otimes y)=x \cdot \nu(y)$
is called the self-distributive (SD) map associated with $\nu$.
}
\end{definition}

The left hand side and the right hand side of the above definition are diagrammatically represented by 
the top left and the bottom left of Figure~\ref{SD}, where a circled trivalent vertex represents 
the map $q$ in question.

\begin{lemma} 
The SD map $q$ associated with $\nu$ defined in Definition~\ref{def:SD} is indeed (categorically) self-distributive.
\end{lemma}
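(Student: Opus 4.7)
The plan is to unfold both sides of the categorical self-distributivity identity on a simple tensor $x \otimes y \otimes z$ and reduce each to the common expression $x \cdot (\nu(y)\nu(z))$. The left-hand side is immediate: $q(q \otimes \mathbb{1})(x \otimes y \otimes z) = (x \cdot \nu(y)) \cdot \nu(z) = x \cdot (\nu(y)\nu(z))$ by associativity of the right $H$-action. Unpacking the right-hand side of the identity yields $(x \cdot \nu(z^{(1)})) \cdot \nu(y \cdot \nu(z^{(2)}))$, and the task is to massage this expression into the same normal form.

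The key step is to apply the augmentation axiom $\nu(y \cdot g) = S(g^{(1)}) \nu(y) g^{(2)}$ with $g = \nu(z^{(2)})$, and then use the coalgebra morphism property $\Delta\nu(z^{(2)}) = \nu(z^{(21)}) \otimes \nu(z^{(22)})$ to conclude that $\nu(y \cdot \nu(z^{(2)})) = S(\nu(z^{(21)})) \, \nu(y) \, \nu(z^{(22)})$. After consolidating the two successive right actions on $x$ via associativity, the right-hand side becomes $x \cdot \bigl[\nu(z^{(1)}) \, S(\nu(z^{(21)})) \, \nu(y) \, \nu(z^{(22)})\bigr]$. Coassociativity of $\Delta$ rewrites $z^{(1)} \otimes z^{(21)} \otimes z^{(22)}$ as $z^{(11)} \otimes z^{(12)} \otimes z^{(2)}$, so that the bracket becomes $\nu(z^{(11)}) \, S(\nu(z^{(12)})) \, \nu(y) \, \nu(z^{(2)})$.

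At this stage the prefix $\nu(z^{(11)}) \, S(\nu(z^{(12)}))$ is precisely $\mu(\mathbb{1} \otimes S)\Delta_H(\nu(z^{(1)}))$, which collapses to $\epsilon_X(z^{(1)}) \cdot 1_H$ by the antipode axiom combined with counit preservation under the coalgebra morphism $\nu$. Pulling the resulting scalar out of the action and applying the counit identity $\epsilon_X(z^{(1)}) \nu(z^{(2)}) = \nu(z)$ returns $x \cdot (\nu(y)\nu(z))$, matching the left-hand side and completing the proof.

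The principal obstacle is the bookkeeping of Sweedler indices: one must track the coproduct pattern through the augmentation axiom carefully and invoke coassociativity at the right moment to expose the antipode fragment. Diagrammatically, the argument amounts to sliding the $S$-fork produced by the augmentation axiom (Figure~\ref{aug}(B)) across a coproduct of $z$ and collapsing the resulting loop via the antipode axiom (Figure~\ref{hopfaxiom}(D)), a short pictorial calculation that mirrors Figure~\ref{SD}.
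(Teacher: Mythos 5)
Your proof is correct and follows essentially the same route as the paper's: Figure~\ref{SD} encodes the identical chain of identities (unfolding $q$, associativity of the action, the augmentation axiom, the coalgebra-morphism property of $\nu$, coassociativity, and the antipode/counit axioms), merely traversed diagrammatically in the opposite direction, inserting the unit--counit pair on the left-hand side rather than collapsing it on the right. One cosmetic remark: your final collapse $\epsilon_H(\nu(z^{(1)}))\,\nu(z^{(2)})=\nu(z)$ is justified directly by the counit axiom of $H$ after rewriting $\nu(z^{(1)})\otimes\nu(z^{(2)})=\Delta_H(\nu(z))$, so no appeal to counit preservation (which Definition~\ref{def:comm} does not explicitly require) is needed.
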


\begin{proof}
The proof is represented by a sequence of diagrams in Figure~\ref{SD}.
Specifically, the equality (1) is the definition of the SD map and  (2) is a property of an action.
The equality (3) is defining properties of a unit and a counit (Figure~\ref{hopfaxiom} (B)), 
(4) is the commutativity of transposition map and unit, and (5) is another axiom of Hopf algebras (Figure~\ref{hopfaxiom} (D)).
After commutativity between multiplication and transposition (6), (co)associativities are applied in (7), action axiom in (8), and the definitions are applied in (9) and (10). 
\end{proof}

\begin{figure}[htb]
\begin{center}
\includegraphics[width=4in]{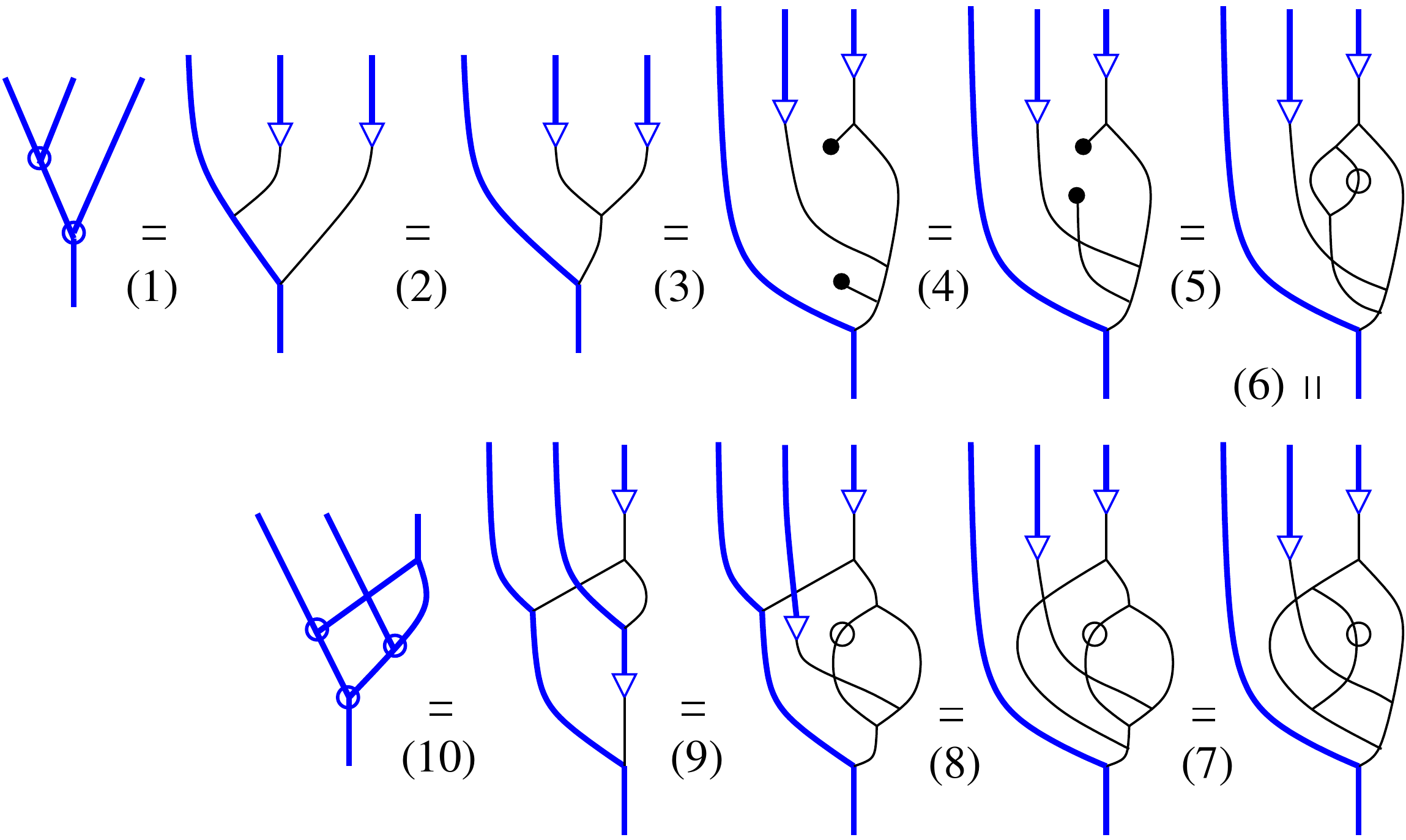}
\end{center}
\caption{Self-distributivity of categorical augmented racks}
\label{SD}
\end{figure}

\begin{remark}
	{\rm 
		Observe that a homomorphism of categorical augmented racks induces a homomorphism of SD structures, as a direct computation shows. If the homomorphism is an isomorphism of categorical augmented racks, then we obtain an isomorphism between SD structures.
	}
\end{remark}

\begin{lemma} \label{lem:SDcompati}
The SD map associated with $\nu$ defined in Definition~\ref{def:SD} is compatible with the comultiplication $\Delta_X$.\end{lemma}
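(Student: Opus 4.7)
The plan is to verify the coalgebra morphism condition
\[
\Delta_X \circ q \;=\; (q \otimes q)\circ (\mathbb{1}\otimes \tau \otimes \mathbb{1})\circ (\Delta_X \otimes \Delta_X)
\]
on $X \otimes X$ by a short Sweedler-style computation, combining the two compatibility ingredients already built into a categorical augmented rack: namely, that the action $\cdot : X\otimes H \to X$ is a coalgebra morphism (Definition 3.1), and that the augmentation $\nu : X \to H$ is a coalgebra morphism (Definition 3.2).

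First, I would unfold the left hand side. For $x,y \in X$,
\[
\Delta_X(q(x \otimes y)) \;=\; \Delta_X(x \cdot \nu(y)).
\]
Applying the compatibility of $\Delta_X$ with the module action (with $g = \nu(y)$) gives
\[
\Delta_X(x \cdot \nu(y)) \;=\; \bigl(x^{(1)} \cdot \nu(y)^{(1)}\bigr) \otimes \bigl(x^{(2)} \cdot \nu(y)^{(2)}\bigr).
\]
Since $\nu$ is a coalgebra morphism, $\Delta_H \nu(y) = (\nu \otimes \nu)\Delta_X(y)$, so $\nu(y)^{(1)} \otimes \nu(y)^{(2)} = \nu(y^{(1)}) \otimes \nu(y^{(2)})$, and the expression becomes
\[
\bigl(x^{(1)} \cdot \nu(y^{(1)})\bigr) \otimes \bigl(x^{(2)} \cdot \nu(y^{(2)})\bigr) \;=\; q(x^{(1)} \otimes y^{(1)}) \otimes q(x^{(2)} \otimes y^{(2)}).
\]

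Next, I would compute the right hand side directly. Unpacking,
\[
(q \otimes q)(\mathbb{1}\otimes \tau \otimes \mathbb{1})(\Delta_X(x)\otimes \Delta_X(y))
\;=\; (q\otimes q)(x^{(1)} \otimes y^{(1)} \otimes x^{(2)} \otimes y^{(2)})
\;=\; q(x^{(1)}\otimes y^{(1)}) \otimes q(x^{(2)}\otimes y^{(2)}),
\]
which matches the expression obtained for $\Delta_X \circ q$. This settles the lemma.

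There is no serious obstacle here; the only subtlety is being careful to apply the two compatibility axioms in the correct order (first the action-comultiplication compatibility, then the coalgebra morphism property of $\nu$), and to note that the transposition $\tau$ in the target identity precisely accounts for the shuffle of the tensor factors $x^{(2)}$ and $y^{(1)}$ coming from the four-fold tensor. A diagrammatic version of the same argument can be given by sliding the $\Delta_X$ past the action vertex via the compatibility square of Figure~\ref{compati} and then past the $\nu$-triangle via Figure~\ref{nucomult}, producing two parallel copies of $q$.
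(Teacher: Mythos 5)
Your proof is correct and follows exactly the same sequence of steps as the paper's own argument (Figure~\ref{nucompati}): unfold the definition of $q$, apply the action--comultiplication compatibility of Figure~\ref{compati}, then the coalgebra-morphism property of $\nu$ from Figure~\ref{nucomult}, and reassemble two copies of $q$. The only difference is that you write it in Sweedler notation while the paper presents it diagrammatically, which you yourself note at the end.
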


\begin{proof}
The proof is represented by a sequence of diagrams in Figure~\ref{nucompati}.
Specifically, the equalities are consequences of: (1) the definition, (2)  the 
compatibility between action and comultiplication (Figure~\ref{compati}), 
(3)  the compatibility between $\nu$ and comultiplication (Figure~\ref{nucomult}), 
and (4) the definition. 
\end{proof}

\begin{figure}[htb]
\begin{center}
\includegraphics[width=3.5in]{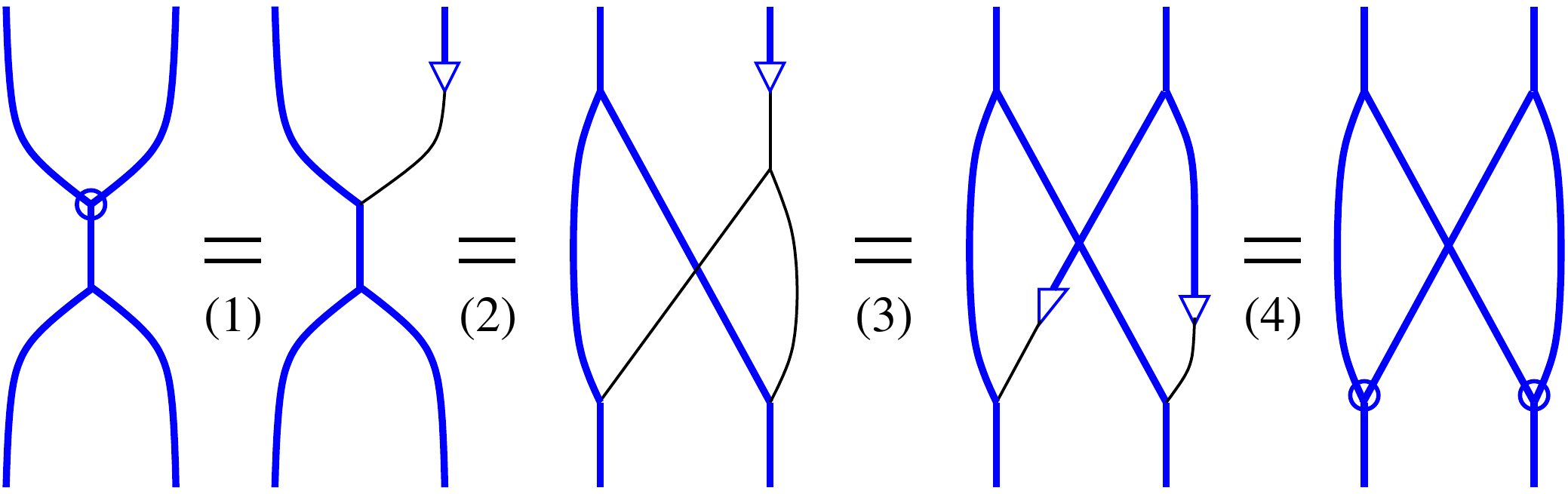}
\end{center}
\caption{Compatibility between SD operation and comultiplication}
\label{nucompati}
\end{figure}

\begin{definition}\cite{CCEScoalg} \label{def:R}
{\rm
Let $X$ be a categorical augmented rack $X$ with the augmentation map 
$\nu: X \rightarrow H$. 
Let $q: X\otimes X  \rightarrow X$ be the SD map associated with $\nu$. 
The  {\it  R-matrix associated with}  a categorical augmented rack $X$ with the augmentation map 
$\nu: X \rightarrow H$ is defined by $R(x \otimes y):= y^{(1)} \otimes q(x \otimes y^{(2)})$. This map is depicted in Figure~\ref{Rmatrix}.
}
\end{definition}

\begin{figure}[htb]
\begin{center}
\includegraphics[width=.5in]{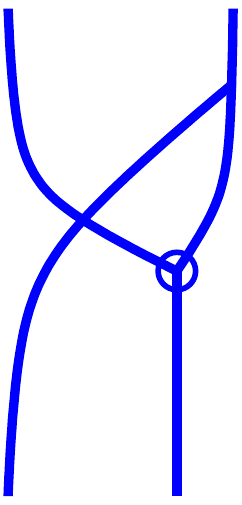}
\end{center}
\caption{The R-matrix associated with an SD map}
\label{Rmatrix}
\end{figure}

\begin{theorem}\label{thm:main}
Let $H$ be a Hopf algebra and $X$ be a cocommutative coalgebra on which $H$ acts, such that the action is compatible with the comultiplication of $X$.
Suppose $X$ is a categorical augmented rack with 
$\nu: X \rightarrow H$ that is a coalgebra morphism.

Then the R-matrix induced from the SD map associated with $\nu$,
as defined in Definition~\ref{def:R}  is indeed a YB operator.
\end{theorem}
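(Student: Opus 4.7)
The plan is to verify the Yang-Baxter equation for $R(x\otimes y)=y^{(1)}\otimes q(x\otimes y^{(2)})$ directly on a generic tensor $x\otimes y\otimes z$, using the three tools now available: the self-distributivity of $q$ (the preceding lemma), the coalgebra-morphism property of $q$ from Lemma~\ref{lem:SDcompati}, and the cocommutativity of $X$. Rather than a diagrammatic proof, I would carry this out in Sweedler notation, since all of the supporting facts are already packaged as algebraic identities and the calculation cleanly isolates where each hypothesis enters.

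First I would expand the left-hand side step by step. Applying $R\otimes{\mathbb 1}$, then ${\mathbb 1}\otimes R$, then $R\otimes{\mathbb 1}$ to $x\otimes y\otimes z$ and using coassociativity to write iterated comultiplications of $z$ as $z^{(1)}\otimes z^{(2)}\otimes z^{(3)}$, one obtains
\[
(R\otimes{\mathbb 1})({\mathbb 1}\otimes R)(R\otimes{\mathbb 1})(x\otimes y\otimes z)=z^{(1)}\otimes q(y^{(1)}\otimes z^{(2)})\otimes q\bigl(q(x\otimes y^{(2)})\otimes z^{(3)}\bigr).
\]
Analogously the right-hand side, with the order $({\mathbb 1}\otimes R)(R\otimes{\mathbb 1})({\mathbb 1}\otimes R)$, yields an expression of the form $z^{(1)}\otimes (\text{something involving }\Delta\, q(y\otimes z^{(3)}))\otimes(\ldots)$; here Lemma~\ref{lem:SDcompati} is invoked to replace $\Delta\, q(y\otimes z^{(3)})$ by $q(y^{(1)}\otimes z^{(3)})\otimes q(y^{(2)}\otimes z^{(4)})$ after a further coassociativity, giving
\[
z^{(1)}\otimes q(y^{(1)}\otimes z^{(3)})\otimes q\bigl(q(x\otimes z^{(2)})\otimes q(y^{(2)}\otimes z^{(4)})\bigr).
\]

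Next I would use the self-distributivity of $q$ on the third tensor slot of the left-hand side to rewrite
\[
q\bigl(q(x\otimes y^{(2)})\otimes z^{(3)}\bigr)=q\bigl(q(x\otimes z^{(3)})\otimes q(y^{(2)}\otimes z^{(4)})\bigr),
\]
where the iterated $\Delta z$ is again renamed via coassociativity. At this point the two sides are identical except that the roles of $z^{(2)}$ and $z^{(3)}$ in the second and third tensor slots are interchanged; the cocommutativity of $X$ (extended to iterated comultiplications) swaps these components and finishes the identification.

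The main obstacle is purely bookkeeping: one must be careful to track which iteration of $\Delta z$ lands where, and to apply the coalgebra-morphism property of $q$ only after the appropriate $R$ has been carried out, so that $\Delta$ acts on the correct factor. The antipode identity for $\nu$ from the definition of augmentation does not explicitly re-enter here, as it has already been absorbed into the self-distributivity statement of the preceding lemma; similarly, the compatibility of the action with $\Delta_X$ has been repackaged as the coalgebra-morphism property of $q$. Thus the YBE reduces, via a finite sequence of algebraic moves, to cocommutativity of $X$.
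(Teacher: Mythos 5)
Your proposal is correct and is essentially the paper's own argument: the paper likewise verifies the YBE directly from the self-distributivity of $q$, the compatibility of $q$ with $\Delta_X$ (Lemma~\ref{lem:SDcompati}), coassociativity, and cocommutativity of $X$ (citing Proposition 4.3 of \cite{CCEScoalg} and reproducing the computation diagrammatically in Figure~\ref{YBE}). Your Sweedler-notation calculation is just the algebraic transcription of those diagrams, with the same three hypotheses entering at the same points.
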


\begin{proof}
In \cite{CCEScoalg}, Proposition 4.3,  it was proved that R-matrix induced from an SD map that is compatible with comultiplication
indeed satisfies the Yang-Baxter equation. For completeness the diagrams for the proof are included in Figure~\ref{YBE}. 
The portion of each equality applied is marked by dotted circles.
Equality (1) in the figure is a sequence of commutativity of the comultiplication and transpositions 
$(\Delta \otimes {\mathbb 1})\tau= ({\mathbb 1}\otimes \tau)(\tau \otimes {\mathbb 1})({\mathbb 1}\otimes \Delta)$ and the coassociativity and (2) is the commutativity of the SD map with transpositions.
Equations (3) and (4) are commutativities with transpositions of operations involved, 
and (5) is the SD condition.
Equality (6) is cocomutativity, (7) is the coassociativity and commutativity with transposition, and (8) 
is the compatibility between comultiplication and the SD map,
followed by again commutativity with transpositions in (9).
Finally cocommutativity in (10) and coassociativity in (11), together with commutativity with transpositions, 
complete the YBE.
The required compatibility is proved in Lemma~\ref{lem:SDcompati}.
The novel aspect of this proof comparing to  \cite{CCEScoalg} is the use of Lemma~\ref{lem:SDcompati}
through the structure of categorical augmentation rack.
\end{proof}

The invertibility of these YB operators will be discussed in Section~\ref{sec:BMC}.

\begin{figure}[htb]
\begin{center}
\includegraphics[width=5in]{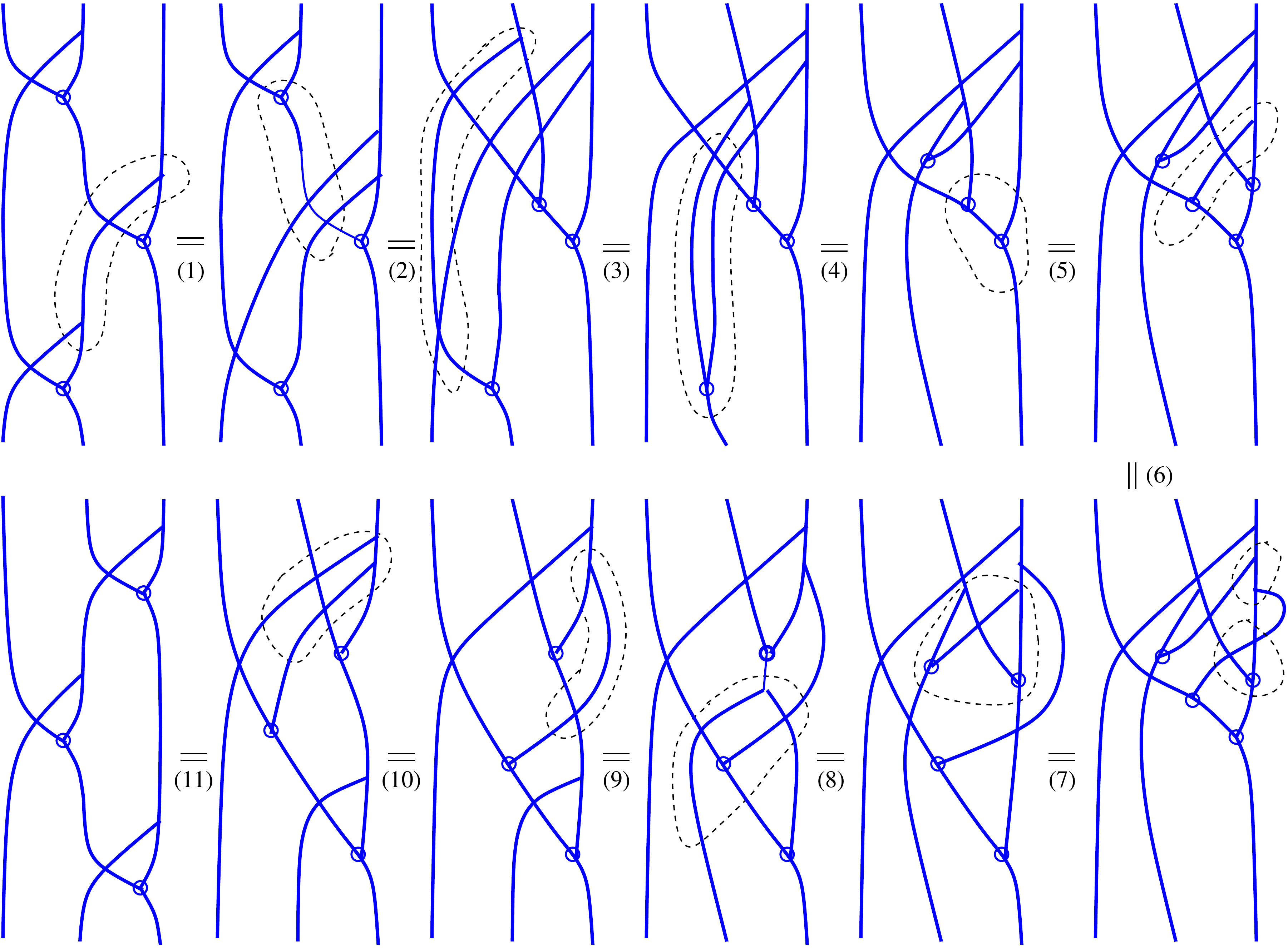}
\end{center}
\caption{YBE is satisfied by R-matrix associated with the SD operator}
\label{YBE}
\end{figure}

\section{From quantum heaps} \label{sec:heap}

In this section we provide  categorical augmented racks from quantum heaps and 
give a proof that the associated R-matrix is indeed a Yang-Baxter solution.
The construction is similar to Appendix B in \cite{ESZcascade}, where ternary augmented rack was discussed instead.

A {\it group heap} is a group $G$ with a ternary operation $T: G \times G \times G \rightarrow G$
defined by $T(x,y,z)=xy^{-1}z$, and is known to satisfy the ternary self-distributive (SD) law,
$$
		T(T(x,y,z),u,v)=T(T(x,u,v),T(y,u,v),T(z,u,v)), \quad x,y,z,u,v \in G. 
		$$ 
		Heaps have been studied recently in knot theory, due to the SD property,
		see for example \cites{EGM,ESZheap,ESZcascade}.
This operation can be regarded as $y^{-1}z$ acting on the right of $x$. In Hopf algebra, 
$y^{-1}z$ is reinterpreted by $S(y)z$, where $S$ is the antipode. This motivates the following definition,
and also the motivation of naming the corresponding structure a {\it quantum heap}. 
Quantum heaps have also been used in knot theory~\cites{SZframedlinks,SZbrfrob}.

\begin{lemma}\label{lem:heap}
Let $H$ be a cocommutative Hopf algebra.
Set $X := H \otimes H$ and define $\nu: X \rightarrow H$ by $\nu(x \otimes y) := S(x) y
(=\mu(S(x) \otimes  y)$. 
Also define the action of $H$ on $X$ by 
$$(x \otimes y) \cdot g:= (x \otimes y) \Delta_H(g)
(=x g^{(1)} \otimes y g^{(2)}). $$
Then $\nu$ defines a categorical augmented rack structure on  $X$.
\end{lemma}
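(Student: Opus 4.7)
The plan is to endow $X=H\otimes H$ with its tensor product coalgebra structure $\Delta_X(x\otimes y)=(x^{(1)}\otimes y^{(1)})\otimes(x^{(2)}\otimes y^{(2)})$ and then verify, in turn, each ingredient required by the definition of a categorical augmented rack: (i) $X$ is a right $H$-module under the given action, (ii) this action is compatible with $\Delta_X$ in the sense of Definition~\ref{def:compati}, (iii) $\nu$ is a coalgebra morphism in the sense of Definition~\ref{def:comm}, and (iv) $\nu$ satisfies the augmentation identity $\nu(x\cdot g)=S(g^{(1)})\nu(x)g^{(2)}$.

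Steps (i) and (iv) are short and do not require cocommutativity. For (i), the unit axiom follows from $\Delta_H(1)=1\otimes 1$, while associativity reduces to the bialgebra identity $\Delta_H(gh)=\Delta_H(g)\Delta_H(h)$ combined with associativity of $\mu_H$. For (iv), direct expansion using only anti-multiplicativity of the antipode gives
\[
\nu((x\otimes y)\cdot g)=S(xg^{(1)})\,yg^{(2)} = S(g^{(1)})\,S(x)\,y\,g^{(2)} = S(g^{(1)})\,\nu(x\otimes y)\,g^{(2)}.
\]

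The substance lies in (ii) and (iii), where cocommutativity of $H$ is used. For (ii), expanding the left hand side of the compatibility square yields $x^{(1)}g^{(1)(1)}\otimes y^{(1)}g^{(2)(1)}\otimes x^{(2)}g^{(1)(2)}\otimes y^{(2)}g^{(2)(2)}$, whereas applying $\Delta_X$ first and then acting diagonally by $g^{(1)}$ and $g^{(2)}$ produces the same expression with the middle two $g$-tensorands transposed. Equality then follows from invariance of the iterated coproduct $\Delta_H^3(g)$ under swapping its second and third factors, which is immediate from cocommutativity of $H$. For (iii), the identity $\Delta_H\circ S=\tau\circ (S\otimes S)\circ \Delta_H$ together with multiplicativity of $\Delta_H$ gives $\Delta_H(\nu(x\otimes y))=\Delta_H(S(x)y)=S(x^{(2)})y^{(1)}\otimes S(x^{(1)})y^{(2)}$, which matches $(\nu\otimes\nu)\Delta_X(x\otimes y)=S(x^{(1)})y^{(1)}\otimes S(x^{(2)})y^{(2)}$ after swapping $x^{(1)}$ and $x^{(2)}$ by cocommutativity.

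The main obstacle is purely bookkeeping: tracking Sweedler indices in iterated coproducts and identifying which tensor-factor transpositions are forced; once that is done, cocommutativity of $H$ resolves every remaining discrepancy.
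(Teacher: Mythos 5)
Your proposal is correct and follows essentially the same route as the paper, which carries out the same verifications diagrammatically: cocommutativity for the coalgebra-morphism property of $\nu$, and anti-multiplicativity of $S$ plus associativity for the augmentation identity. If anything you are slightly more complete, since you also spell out the module axioms and the compatibility of the action with $\Delta_X$, which the paper's proof of this particular lemma leaves implicit (it only records the analogous compatibility check for the adjoint example).
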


\begin{proof}
The map $\nu$ defined is represented by Figure~\ref{heapnu} (A). 
Define the comultiplication on $X$ by 
$\Delta_X ( x \otimes y) = (x^{(1)} \otimes y^{(1)} ) \otimes (x^{(2)} \otimes y^{(2)} )$,
where $\Delta_H (x)=x^{(1)} \otimes x^{(2)} $ and $\Delta_H (y)=y^{(1)} \otimes y^{(2)} $,
as depicted in Figure~\ref{heapnu} (B). 
The action is depicted in (C). 

Then a proof for the commutativity of $\nu$ and $\Delta_H$ is indicated in Figure~\ref{heapcomult}. 
Specifically, (1) is the definition, (2) is the compatibility between multiplication and comultiplication of a Hopf algebra, 
(3) is the equality between $S$ and $\Delta$, (4) is the cocommutativity, and (5) is the definition.

A proof for the augmentation map condition is indicated in Figure~\ref{heapcataug}. 
Specifically, (1) is the definition, (2) is the relation between $\mu$ and $S$, (3) and (4)  are associativity, 
and (5) is the definition.
The last figure is similar to the figure in \cite{ESZcascade} Appendix B, where the diagrams were used for ternary augmentation instead.
\end{proof}

\begin{figure}[htb]
\begin{center}
\includegraphics[width=3.2in]{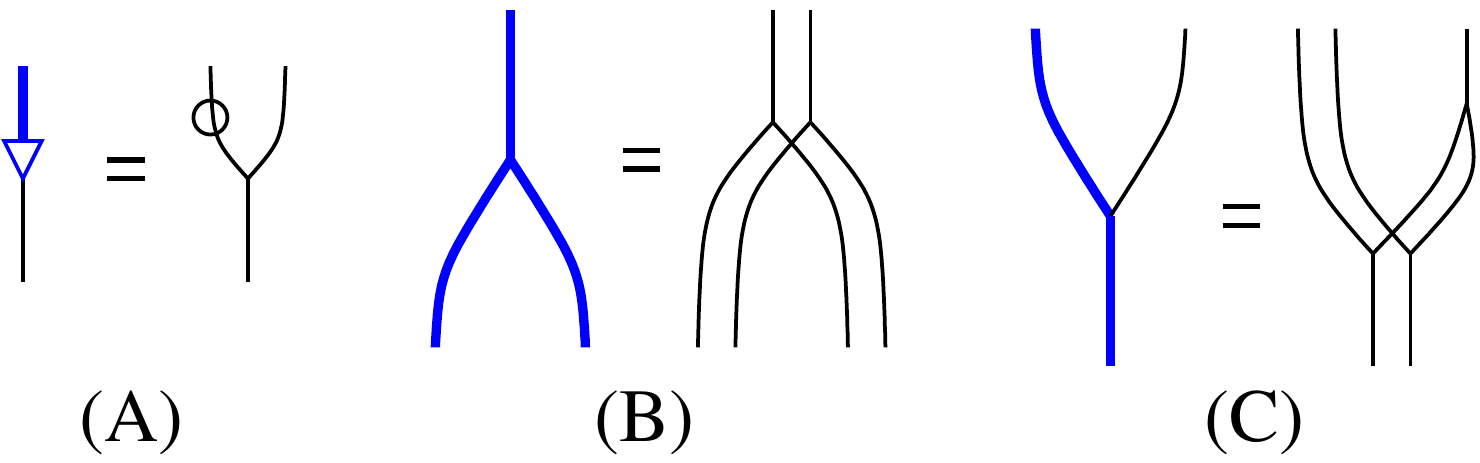}
\end{center}
\caption{Comultiplication and  $\nu$ for quantum heaps}
\label{heapnu}
\end{figure}

\begin{proposition}
Let $H$ be a Hopf algebra.
Set $X := H \otimes H$ and define  the categorical augmentation map $\nu$
and the action of $H$ on $X$ as in 
Lemma~\ref{lem:heap},
$\nu: X \rightarrow H$ by $\nu(x \otimes y) := S(x) y
=m(S(x) \otimes  y)$,
and 
y $(x \otimes y) \cdot g:= (x \otimes y) \Delta_H(g)
=(x g^{(1)}) \otimes (y g^{(2)})$.
Then the R-matrix associated to $\nu$ is indeed a solution to the Yang-baxter equation.
\end{proposition}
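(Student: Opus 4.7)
The plan is to reduce the Proposition to a direct application of Theorem~\ref{thm:main}. That theorem takes as input a cocommutative coalgebra $X$, a Hopf algebra $H$ acting compatibly on $X$, and an augmentation map $\nu : X \to H$ which is a coalgebra morphism; its conclusion is precisely that the associated R-matrix $R(x \otimes y) = y^{(1)} \otimes q(x \otimes y^{(2)})$ solves the YBE. So the task is to check that the concrete quantum-heap data furnished by Lemma~\ref{lem:heap} fits into this framework.

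First, I would assemble what is already proved. Lemma~\ref{lem:heap} (for which $H$ is cocommutative, an assumption I take to carry over implicitly here) has already produced: (a) the comultiplication $\Delta_X(x \otimes y) = (x^{(1)} \otimes y^{(1)}) \otimes (x^{(2)} \otimes y^{(2)})$ on $X = H \otimes H$; (b) the compatibility of the action $(x \otimes y)\cdot g = (x g^{(1)}) \otimes (y g^{(2)})$ with $\Delta_X$; (c) the fact that $\nu(x \otimes y) = S(x) y$ is a coalgebra morphism $X \to H$; and (d) the augmentation identity $\nu(z \cdot g) = S(g^{(1)}) \nu(z) g^{(2)}$. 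Items (b)--(d) are exactly the categorical augmented rack axioms, and (c) is the further coalgebra-morphism condition required in the hypothesis of Theorem~\ref{thm:main}.

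Second, I would verify the one remaining hypothesis of Theorem~\ref{thm:main}, namely cocommutativity of $X$. This is a short calculation: the transposition $\tau_X$ on $X \otimes X$ factors through the two copies of $\tau_H$ acting on the paired $H$-components of $\Delta_X$, and by cocommutativity of $H$ each $\tau_H \Delta_H = \Delta_H$, so $\tau_X \Delta_X = \Delta_X$. With all hypotheses of Theorem~\ref{thm:main} in place, the R-matrix associated to $\nu$ via the SD map $q(x \otimes y) = x \cdot \nu(y)$ (which in our case sends $(a \otimes b) \otimes (c \otimes d)$ to $(a\,(S(c)d)^{(1)}) \otimes (b\,(S(c)d)^{(2)})$) is a Yang-Baxter operator.

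I expect no real obstacle: essentially all of the diagrammatic work — the augmentation identity, the coalgebra-morphism property of $\nu$, and the compatibility of the SD map with $\Delta_X$ (Lemma~\ref{lem:SDcompati}) — has been done either in Lemma~\ref{lem:heap} or in the setup of Theorem~\ref{thm:main}. The only mild subtlety is making explicit that the cocommutativity of $X$ used by Theorem~\ref{thm:main} is inherited tensor-factor-wise from that of $H$, and noting that this is the same hypothesis already required to make Lemma~\ref{lem:heap} go through. Invertibility of the resulting R-matrix is deferred, as indicated in the paper, to Section~\ref{sec:BMC}.
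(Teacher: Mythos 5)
Your proposal is correct and follows exactly the paper's route: the paper's own proof is the one-line deduction from Theorem~\ref{thm:main} and Lemma~\ref{lem:heap}, and your additional check that cocommutativity of $X = H \otimes H$ is inherited factor-wise from the (implicitly assumed) cocommutativity of $H$ is a worthwhile explicit verification of a hypothesis the paper leaves tacit.
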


\begin{proof}
This follows from Theorem~\ref{thm:main} and Lemma~\ref{lem:heap}.
\end{proof}

\begin{figure}[htb]
\begin{center}
\includegraphics[width=4.5in]{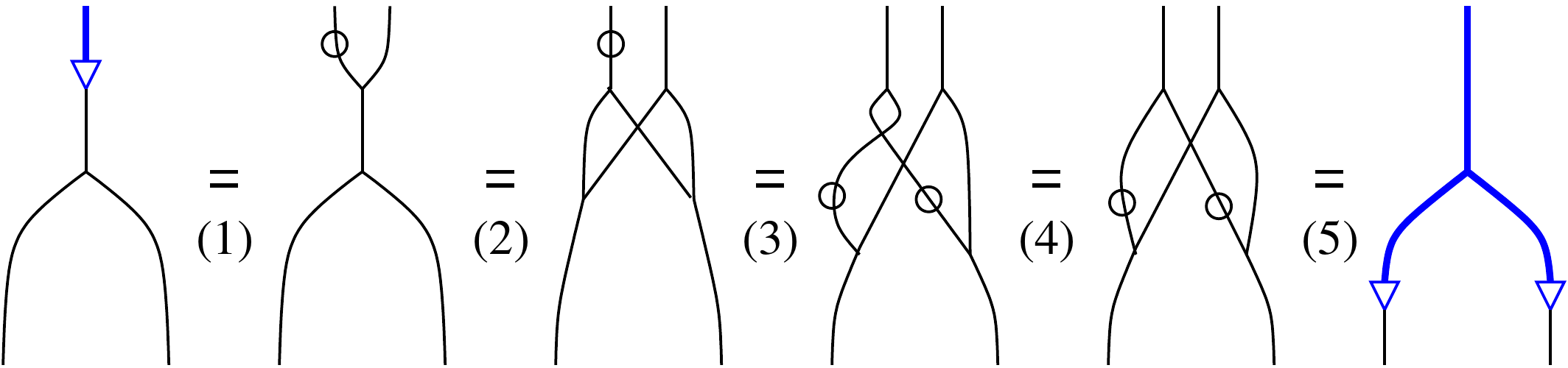}
\end{center}
\caption{Commutativity of $\nu$ with comultiplication for quantum heaps}
\label{heapcomult}
\end{figure}

\begin{figure}[htb]
\begin{center}
\includegraphics[width=4.5in]{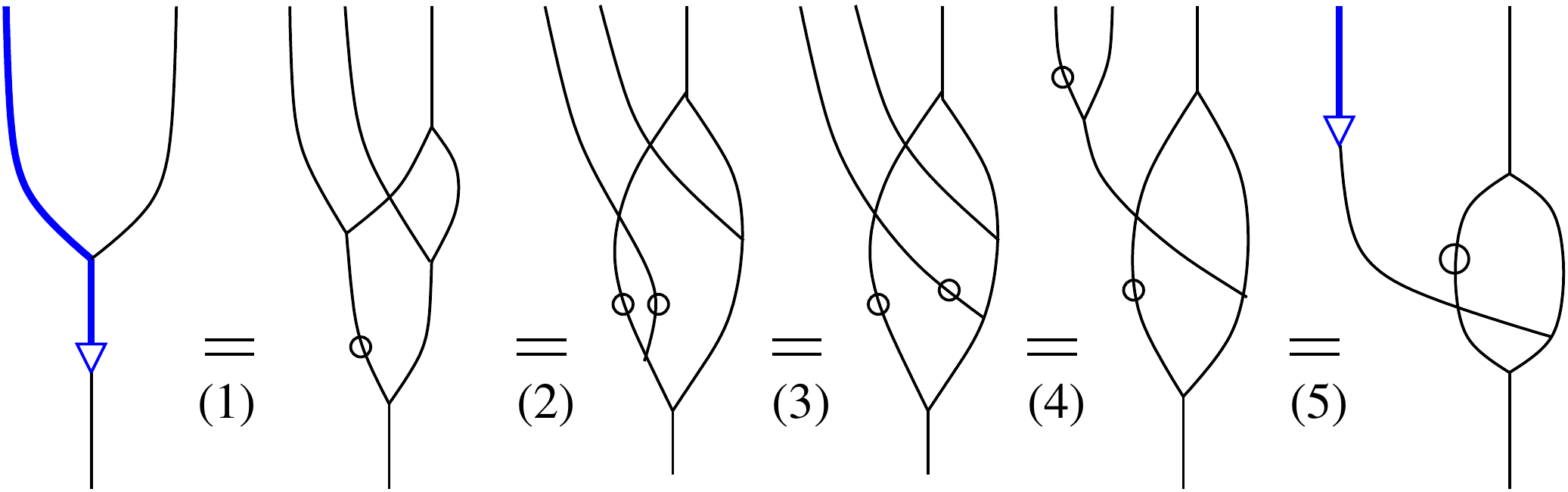}
\end{center}
\caption{Augmentation map for quantum heaps}
\label{heapcataug}
\end{figure}

\section{From adjoint of Hopf algebras}\label{sec:ad}

In this section we show that the doubling of adjoint map of a Hopf algebra gives rise to 
a YB operator using the categorical augmented rack structure.

\begin{definition}[e.g.~\cite{CCESadj}] 
{\rm
Let $H$ be a Hopf algebra and let $y \in H$.
Define the {\it adjoint map} $ad_y: H \rightarrow H$ by $ad_y(x):= S(y^{(1)}) x y^{(2)}$.
}
\end{definition}

A diagram representing the adjoint map is the same as Figure~\ref{aug} (B) except there is no 
blue line and triangle at the top left corner.

\begin{figure}[htb]
\begin{center}
\includegraphics[width=3.2in]{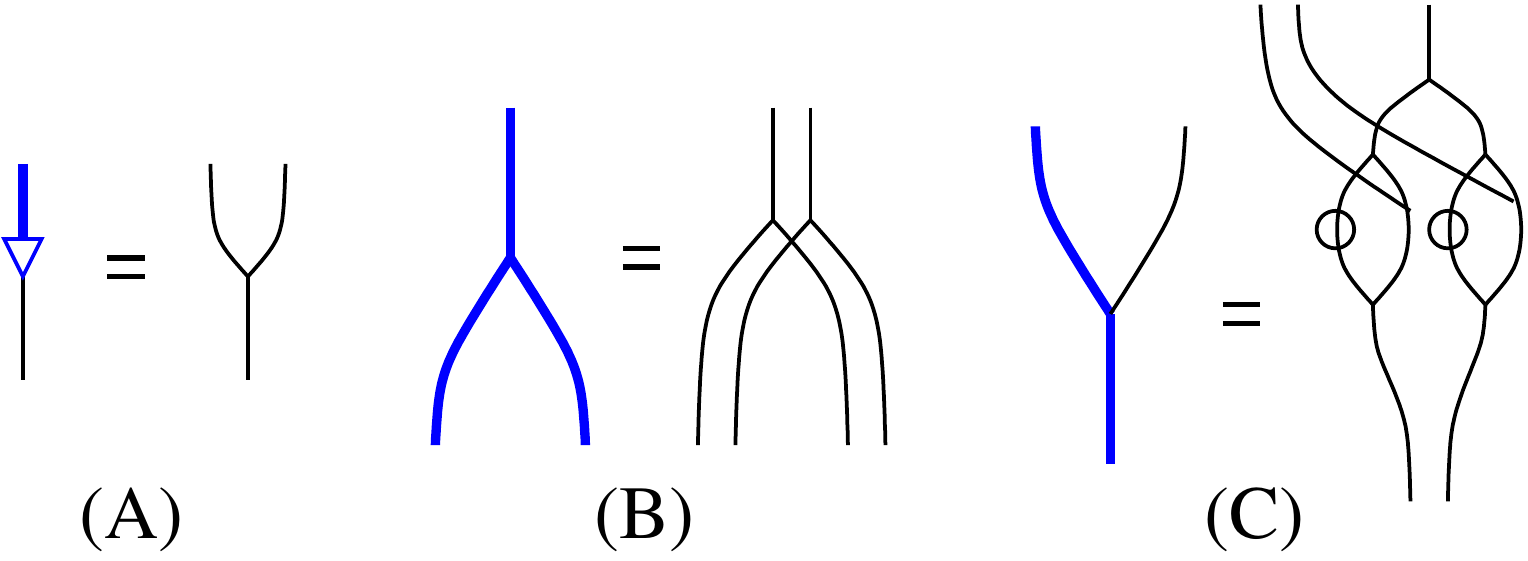}
\end{center}
\caption{Comultiplication and  $\nu$ for adjoint maps}
\label{ad}
\end{figure}

\begin{figure}[htb]
\begin{center}
\includegraphics[width=4in]{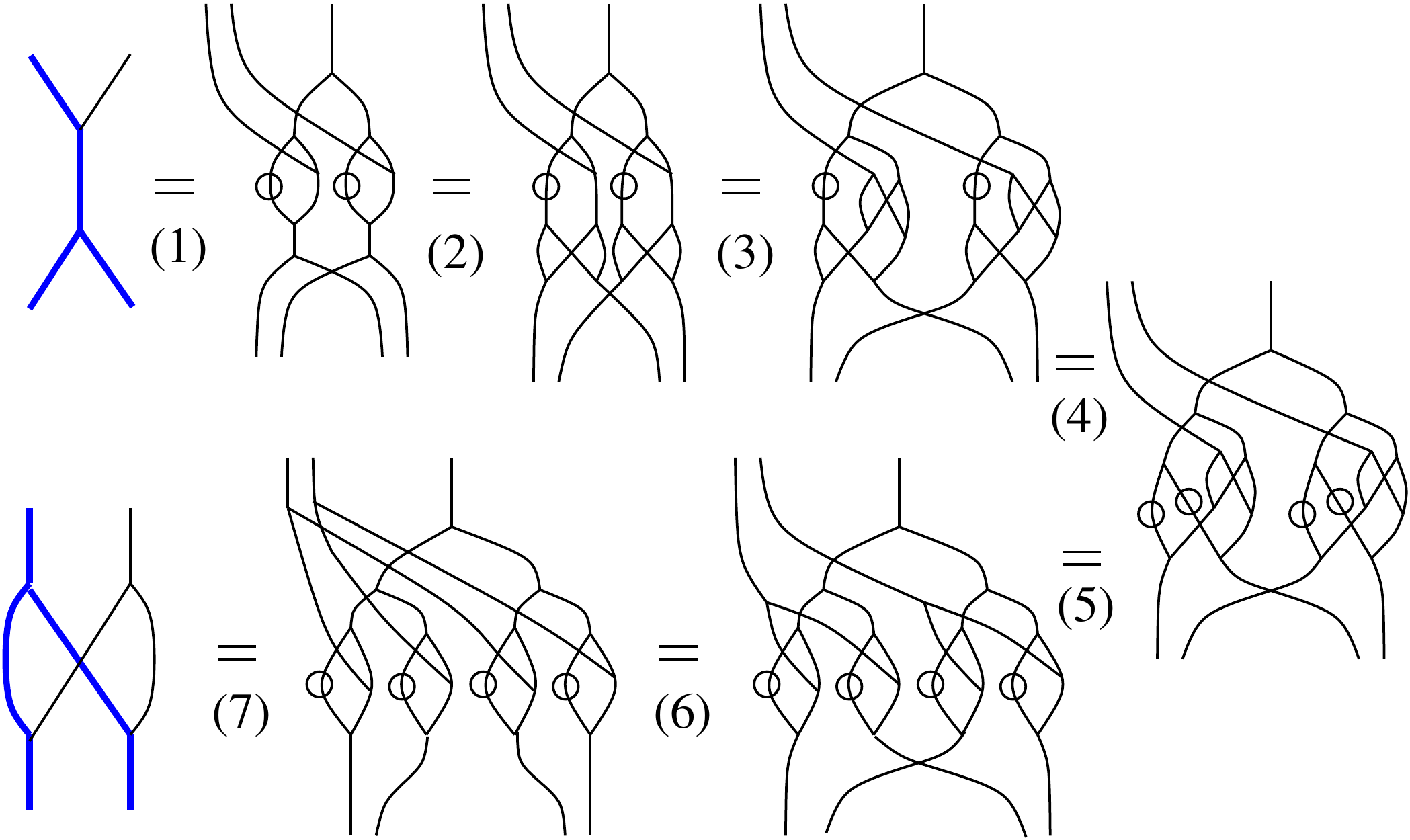}
\end{center}
\caption{Compatibility of the action and $\Delta_X$ for the adjoint}
\label{adaction}
\end{figure}

\begin{lemma}\label{lem:double}
Let $H$ be a cocommutative Hopf algebra.
Set $X := H \otimes H$ and define $\nu: X \rightarrow H$ by $\nu(x \otimes y) := x y
(=\mu(x \otimes  y) )$. 
Also define the action of $H$ on $X$ by $(x \otimes y) \cdot g:= ad_{g^{(1)} }(x) \otimes ad_{g^{(2)}} (y)$.
Then $\nu$ defines a categorical augmented rack structure on  $X$.
\end{lemma}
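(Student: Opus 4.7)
The plan is to verify the four properties that make $(X, H, \nu)$ a categorical augmented rack in the sense of Section~\ref{sec:CAR}: that the prescribed formula defines a right $H$-module structure on $X = H \otimes H$; that this action is compatible with the tensor-product comultiplication $\Delta_X(x \otimes y) = (x^{(1)} \otimes y^{(1)}) \otimes (x^{(2)} \otimes y^{(2)})$; that $\nu(x \otimes y) = \mu(x \otimes y) = xy$ is a coalgebra morphism $X \to H$; and that $\nu$ satisfies $\nu((x\otimes y)\cdot g) = S(g^{(1)})\,\nu(x \otimes y)\, g^{(2)}$. The auxiliary fact I would record once at the start and reuse throughout is that, when $H$ is cocommutative, each $ad_h$ is itself a coalgebra morphism, i.e.\ $\Delta \circ ad_h = (ad_{h^{(1)}} \otimes ad_{h^{(2)}}) \circ \Delta$; this follows from $\Delta \circ S = (S \otimes S)\circ \Delta$ (valid for cocommutative $H$) together with the bialgebra compatibility between $\mu$ and $\Delta$.

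For the module axioms, I would first record the identities $ad_h \circ ad_g = ad_{gh}$ (an immediate consequence of $\Delta(gh) = \Delta(g)\Delta(h)$ and associativity) and $ad_1 = \mathbb{1}$. Because the action on $X$ is diagonal in the two tensor factors, the associativity $((x\otimes y)\cdot g)\cdot h = (x \otimes y)\cdot(gh)$ reduces componentwise to these two identities. For compatibility with $\Delta_X$, I would expand both sides using the coalgebra-morphism property of $ad$; the remaining content is the equality $g^{(1)(1)} \otimes g^{(2)(1)} \otimes g^{(1)(2)} \otimes g^{(2)(2)} = g^{(1)(1)} \otimes g^{(1)(2)} \otimes g^{(2)(1)} \otimes g^{(2)(2)}$, which is a swap of the middle two factors and hence holds by cocommutativity. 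That $\nu = \mu$ is a coalgebra morphism is the bialgebra compatibility itself, after an application of cocommutativity to undo the $\tau$ that appears in $\Delta \circ \mu$. Each of these is naturally presented as a short diagrammatic chase, in the spirit of Figures~\ref{heapnu} and~\ref{adaction}.

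The substantive step is the augmentation identity. Unpacking, the left-hand side equals $\mu\bigl(ad_{g^{(1)}}(x) \otimes ad_{g^{(2)}}(y)\bigr) = S(g^{(1)(1)})\,x\,g^{(1)(2)}\,S(g^{(2)(1)})\,y\,g^{(2)(2)}$, which by coassociativity I would relabel as $S(g^{(1)})\,x\,g^{(2)}\,S(g^{(3)})\,y\,g^{(4)}$. The crucial observation is that the middle factor $g^{(2)}\,S(g^{(3)})$, viewed as $\mu(\mathbb{1}\otimes S)$ applied to an internal coproduct inside the fourfold iterate $\Delta^{(3)}(g)$, collapses via the antipode condition (Figure~\ref{hopfaxiom}(D)) to $\epsilon(g^{(2)})\,1_H$; absorbing this factor via the counit axiom yields $S(g^{(1)})\,xy\,g^{(2)} = S(g^{(1)})\,\nu(x\otimes y)\,g^{(2)}$, which is exactly the required right-hand side. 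I expect the main obstacle to be purely notational bookkeeping of Sweedler indices, and the cleanest presentation will be diagrammatic in the style of Figure~\ref{heapcataug}, with dotted circles marking the places where the antipode, counit, and cocommutativity axioms are invoked.
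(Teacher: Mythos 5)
Your proposal is correct and follows essentially the same route as the paper's (diagrammatic) proof: compatibility of the adjoint action with $\Delta_X$ via the bialgebra axiom plus the $S$--$\Delta$ relation and cocommutativity, the coalgebra-morphism property of $\nu=\mu$ from the bialgebra axiom, and the augmentation identity by collapsing the inner $g^{(2)}S(g^{(3)})$ with the antipode and counit conditions. Your extra bookkeeping (the module axioms via $ad_h\circ ad_g = ad_{gh}$, and isolating the fact that each $ad_h$ is a coalgebra morphism) is a harmless, slightly more explicit organization of the same argument.
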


\begin{proof}
The map $\nu$ defined is represented by Figure~\ref{ad} (A). 
Define the comultiplication on $X$ by 
$\Delta_X ( x \otimes y) = (x^{(1)} \otimes y^{(1)} ) \otimes (x^{(2)} \otimes y^{(2)} )$,
where $\Delta_H (x)=x^{(1)} \otimes x^{(2)} $ and $\Delta_H (y)=y^{(1)} \otimes y^{(2)} $,
as depicted in Figure~\ref{ad} (B). 
The action is depicted in (C). 
A proof for the compatibility between the action  and $\Delta_X$ is depicted in Figure~\ref{adaction}.
Specifically, (1) is the definition, (2) and (3) are the compatibility between multiplication and comultiplication, 
(4) is the relation between $S$ and $\Delta$ together with cocommutativity, 
 (5) and (6)  are the commutativity with  $\tau$, and (7) is the definition.

The commutativity of $\nu$ and $\Delta_H$ follows from the compatibility between multiplication and comultiplication 
as  depicted in Figure~\ref{adcomult}. 
The augmentation map condition is depicted in Figure~\ref{double}. 
Specifically, (1) is the definition, (2) is (co)associativity, (3) is (co)associativity together with the antipode condition
(Figure~\ref{hopfaxiom} (D)), (4) is the associativity, and (5) is the commutativity with $\tau$ together with the definition.
The last figure is similar to the figure in \cite{ESZcascade} Appendix B, where the diagrams were used for ternary augmentation instead.
\end{proof}

\begin{figure}[htb]
\begin{center}
\includegraphics[width=2.5in]{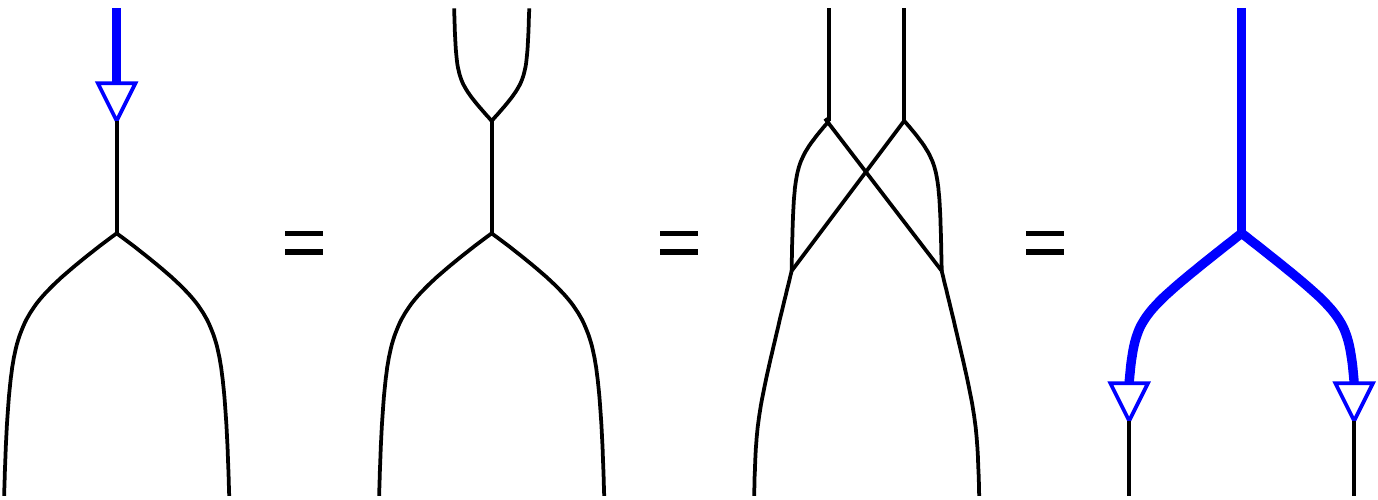}
\end{center}
\caption{Commutativity of $\nu$ with comultiplication for adjoint maps}
\label{adcomult}
\end{figure}

\begin{proposition}
Let $H$ be a Hopf algebra.
Set $X := H \otimes H$ and define  the categorical augmentation map $\nu$
and the action of $H$ on $X$ as in 
Lemma~\ref{lem:double},
by $\nu(x \otimes y) := x y
(=\mu(x \otimes  y ) )$, and
 $(x \otimes y) \cdot g:= ad_{g^{(1)} }(x) \otimes ad_{g^{(2)}} (y)$.
Then the R-matrix associated to $\nu$ is indeed a solution to the Yang-baxter equation.
\end{proposition}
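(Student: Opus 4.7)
The plan is to deduce the proposition directly from Theorem~\ref{thm:main} combined with Lemma~\ref{lem:double}, following the same template used to establish the analogous result for quantum heaps in Section~\ref{sec:heap}. Recall that Theorem~\ref{thm:main} requires three inputs: (a) a cocommutative coalgebra $X$ on which $H$ acts so that the action is a coalgebra morphism; (b) a coalgebra morphism $\nu : X \rightarrow H$; and (c) the augmentation identity $\nu(x \cdot g) = S(g^{(1)}) \nu(x) g^{(2)}$.

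Lemma~\ref{lem:double} already verifies (a), (b), and (c) for the present construction. The compatibility of the action with $\Delta_X$ is recorded in Figure~\ref{adaction}; the fact that $\nu = \mu$ is a coalgebra morphism is Figure~\ref{adcomult}; and the augmentation identity is the content of Figure~\ref{double}. The only point not explicit in that lemma is the cocommutativity of $X = H \otimes H$ itself, but this is immediate: applying $\tau_{X,X}$ to $\Delta_X(x \otimes y) = (x^{(1)} \otimes y^{(1)}) \otimes (x^{(2)} \otimes y^{(2)})$ yields $(x^{(2)} \otimes y^{(2)}) \otimes (x^{(1)} \otimes y^{(1)})$, which coincides with $\Delta_X(x \otimes y)$ once the cocommutativity of $\Delta_H$ is applied factor-wise.

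With these hypotheses in hand, Theorem~\ref{thm:main} provides the conclusion: the R-matrix $R(u \otimes v) = v^{(1)} \otimes q(u \otimes v^{(2)})$ associated with $\nu$, where $q(u \otimes v) = u \cdot \nu(v)$ is the self-distributive map of Definition~\ref{def:SD}, is a solution to the YBE. I do not anticipate any genuine obstacle here, since the heavy lifting---the self-distributivity of $q$, its compatibility with $\Delta_X$ via Lemma~\ref{lem:SDcompati}, and the diagrammatic derivation of the braid relation---has already been carried out at the abstract level in Theorem~\ref{thm:main}. The cocommutativity assumption on $H$ is tacit in the statement and inherited from Lemma~\ref{lem:double}, exactly as in the analogous proposition following Lemma~\ref{lem:heap}; if one wished to make the plan completely self-contained, the only thing to check beyond a citation would be the one-line verification that $H \otimes H$ is cocommutative when $H$ is.
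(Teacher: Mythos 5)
Your proposal is correct and follows exactly the paper's route: the paper's own proof is the one-line deduction from Theorem~\ref{thm:main} and Lemma~\ref{lem:double}, and your additional remarks (which figure verifies which hypothesis, and the factor-wise cocommutativity of $X = H \otimes H$) simply make explicit what the paper leaves implicit. No gap.
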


\begin{proof}
This follows from Theorem~\ref{thm:main} and Lemma~\ref{lem:double}.
\end{proof}

\begin{figure}[htb]
\begin{center}
\includegraphics[width=4.5in]{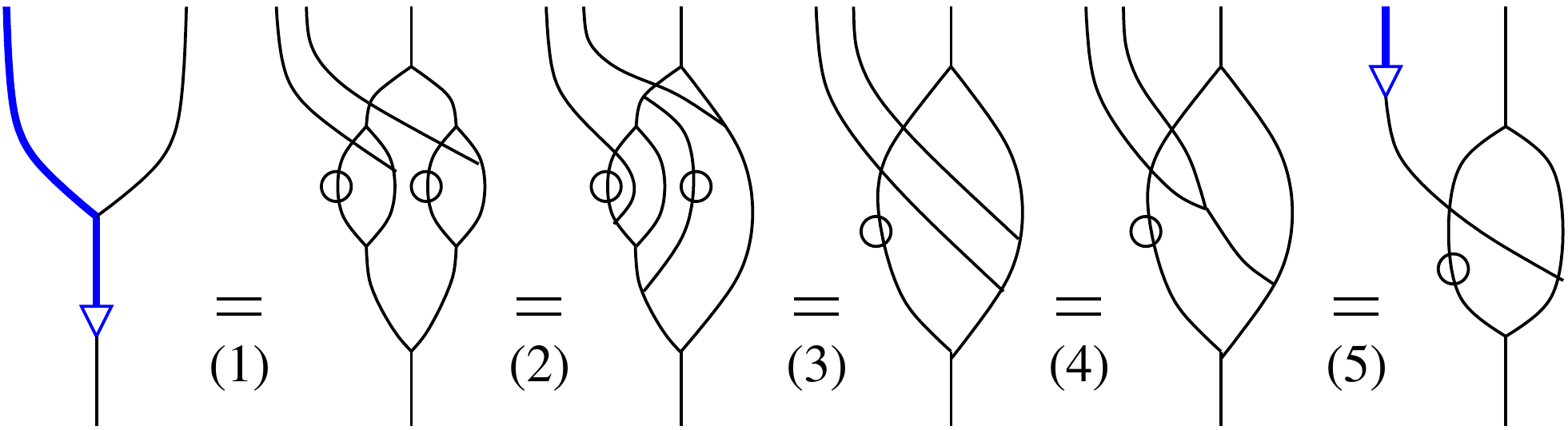}
\end{center}
\caption{Augmentation map for adjoint maps}
\label{double}
\end{figure}

\section{Doubling construction via multiplication}\label{sec:doubling}

In this section we provide a method of inductive construction of  categorical augmented racks
by doubling and multiplication.

\begin{figure}[htb]
\begin{center}
\includegraphics[width=3in]{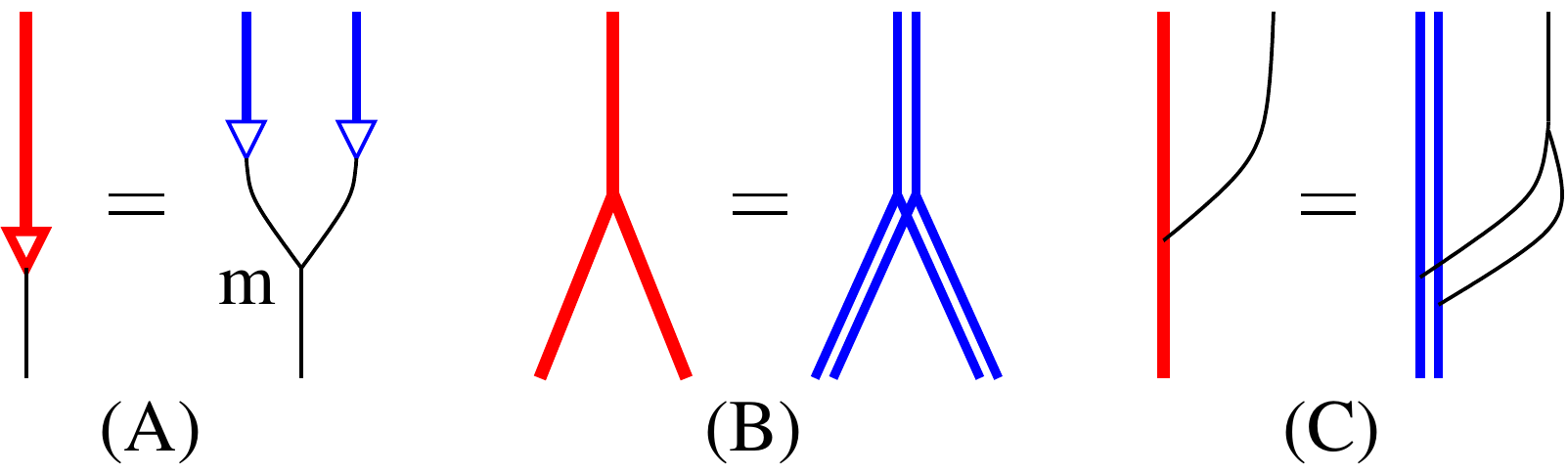}
\end{center}
\caption{Doubling defined inductively}
\label{inductive}
\end{figure}

\begin{theorem}\label{thm:ind}
As in Theorem~\ref{thm:main}, 
let $H$ be a Hopf algebra and $X$ be a coalgebra on which $H$ acts, such that the action is compatible with the comultiplication of $X$.
Suppose $X$ is a categorical augmented rack with 
$\nu: X \rightarrow H$ that is a coalgebra morphism.

Let $Y:=X \times X$ be equipped with the coalgebra structure inherited from $X$,
$$\Delta_Y := ({\mathbf 1} \otimes \tau \otimes {\mathbf 1}) ( \Delta_X \otimes \Delta_X) : Y \rightarrow Y \otimes Y.$$
 % where $\tau$ is induced from the transposition $\tau(x \otimes y)=y \otimes x$. 
Equip $Y$ with the right action via $X$ through $\Delta_X$ as well, $y \cdot h:=y \cdot \Delta_H(h)$.

Define $\tilde{\nu} : Y \rightarrow H$ by $\tilde{\nu} := \mu (\nu \otimes \nu)$, where $\mu$ is the multiplication on $H$. Then $Y$ is a categorical augmented rack with $\tilde{\nu}$.
\end{theorem}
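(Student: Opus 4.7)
The plan is to verify, one by one, the conditions that make $(Y,\tilde\nu)$ into a categorical augmented rack over $H$: that $Y$ is a coalgebra and a right $H$-module with action compatible with $\Delta_Y$, that $\tilde\nu$ is a coalgebra morphism, and that the augmentation identity $\tilde\nu(y\cdot h) = S(h^{(1)})\,\tilde\nu(y)\,h^{(2)}$ holds. I would work in Sweedler notation, paralleling the diagrammatic calculations elsewhere in the paper.

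The coalgebra and module axioms on $Y$ are routine: coassociativity and counitality of $\Delta_Y = ({\mathbf 1}\otimes\tau\otimes{\mathbf 1})(\Delta_X\otimes\Delta_X)$ reduce to the corresponding axioms for $\Delta_X$ together with naturality of $\tau$, while the module axioms for $y\cdot h := y\cdot\Delta_H(h)$ follow from coassociativity and counitality of $\Delta_H$ combined with the $X$-module axioms. For the compatibility of the action with $\Delta_Y$, I would expand both $\Delta_Y(y\cdot h)$ and $(y^{(1)}\cdot h^{(1)})\otimes(y^{(2)}\cdot h^{(2)})$ for $y = x_1\otimes x_2$, using the compatibility of $\Delta_X$ with the $X$-action on each factor. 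Both sides yield fourfold tensors whose $H$-labels are permutations of $\Delta_H^3(h)$; matching them reduces to the identity $h^{(1,2)}\otimes h^{(2,1)} = h^{(2,1)}\otimes h^{(1,2)}$, i.e.\ symmetry of the middle pair of $\Delta_H^3(h)$. This is the main bookkeeping step, and diagrammatically corresponds to allowing the two middle $H$-strands to be exchanged.

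That $\tilde\nu$ is a coalgebra morphism is a direct unpacking: $\Delta_H$ is an algebra map and $\nu$ a coalgebra map, so $\Delta_H(\nu(x_1)\nu(x_2))$ expands, via $\Delta_H\circ\nu = (\nu\otimes\nu)\circ\Delta_X$ applied to each factor, into the same fourfold expression as $(\tilde\nu\otimes\tilde\nu)\Delta_Y(x_1\otimes x_2)$.

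The core of the proof is the augmentation identity. Applying the augmentation condition of $\nu$ to each factor of $y\cdot h = (x_1\cdot h^{(1)})\otimes(x_2\cdot h^{(2)})$ gives
\begin{equation*}
\tilde\nu(y\cdot h) = S(h^{(1,1)})\,\nu(x_1)\,h^{(1,2)}\,S(h^{(2,1)})\,\nu(x_2)\,h^{(2,2)}.
\end{equation*}
Re-reading $\Delta_H^3$ through coassociativity as $({\mathbf 1}\otimes\Delta_H\otimes{\mathbf 1})\Delta_H^2$ presents the inner pair $h^{(1,2)}\otimes h^{(2,1)}$ as $\Delta_H$ of the middle factor of $\Delta_H^2(h)$, so the antipode axiom collapses $h^{(1,2)}S(h^{(2,1)})$ to $\epsilon(\cdot)\,{\mathbf 1}$, and absorbing the resulting counit via the counit axiom on the remaining factors yields $S(h^{(1)})\,\nu(x_1)\nu(x_2)\,h^{(2)} = S(h^{(1)})\,\tilde\nu(y)\,h^{(2)}$. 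The delicate point is keeping the Sweedler indices aligned so that the middle pair ends up adjacent in the product, making the antipode axiom applicable; this, together with the middle-strand exchange in the compatibility step, is where the main nontriviality of the proof sits.
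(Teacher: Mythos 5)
Your overall route matches the paper's: verify the coalgebra/module structure, the coalgebra-morphism property of $\tilde\nu$, and the augmentation identity, all by Sweedler-notation versions of the diagrammatic calculations. Your treatment of the augmentation identity is correct and is exactly the paper's argument (coassociativity to make $h^{(1,2)}$ and $S(h^{(2,1)})$ adjacent, the antipode axiom to collapse them to a counit, then the counit axiom); likewise the coalgebra-morphism check.

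There is, however, one genuine error: in the compatibility step you assert that matching the two fourfold tensors ``reduces to the identity $h^{(1,2)}\otimes h^{(2,1)} = h^{(2,1)}\otimes h^{(1,2)}$.'' This is not an identity in a general Hopf algebra. Writing $\Delta_H^3(h)=h^{(1)}\otimes h^{(2)}\otimes h^{(3)}\otimes h^{(4)}$, the statement $h^{(1)}\otimes h^{(2)}\otimes h^{(3)}\otimes h^{(4)}=h^{(1)}\otimes h^{(3)}\otimes h^{(2)}\otimes h^{(4)}$ is equivalent to cocommutativity of $H$ (apply $\epsilon$ to the outer legs), and it fails, e.g., for Sweedler's four-dimensional Hopf algebra. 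Theorem~\ref{thm:ind} does not assume $H$ cocommutative, so your verification of the compatibility of the action with $\Delta_Y$ does not go through as written. The paper sidesteps this by proving only the augmentation identity in Theorem~\ref{thm:ind} and deferring the compatibility check to Theorem~\ref{thm:indR}, where cocommutativity of $H$ is added as a hypothesis and the middle-strand exchange is justified precisely by ``coassociativity and cocommutativity.'' Your instinct to verify the compatibility as part of showing $Y$ is a categorical augmented rack is sound (the definition requires it), but you must either add cocommutativity of $H$ to the hypotheses for that step or flag that this is where the extra assumption of Theorem~\ref{thm:indR} enters; as stated, the ``identity'' you invoke is false.
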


\begin{proof}
The definition of $\tilde{\nu}$ is depicted in Figure~\ref{inductive} (A), 
The definition of the comultiplication on $Y$ and  the action of $H$ on $Y$ are depicted in (B) and (C), respectively.
The augmentation condition for $\tilde{\nu}$ is depicted in Figure~\ref{indaug}.
Specifically, (1) and (2) are definitions, (3) and (4) follow from (co)associaivity and the antipode condition, 
(5) is associativity and (6) is the definition.
The result follows.
\end{proof}

\begin{figure}[htb]
\begin{center}
\includegraphics[width=5.3in]{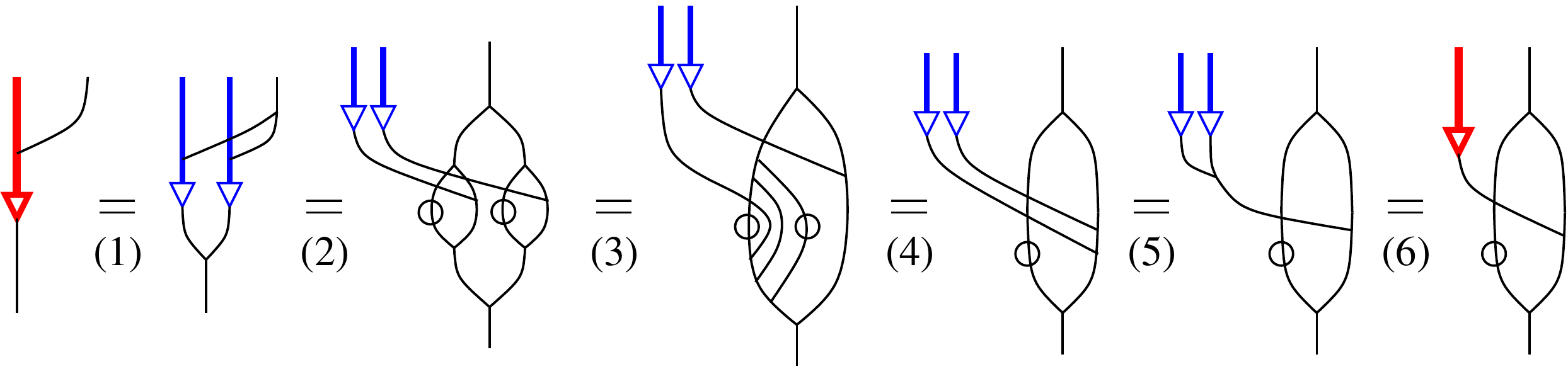}
\end{center}
\caption{Augmentation condition for the inductive construction}
\label{indaug}
\end{figure}

The following theorem constructs an infinite family of YBE solutions inductively using 
Theorems~\ref{thm:main} and \ref{thm:ind}. 

\begin{theorem} \label{thm:indR}
Let $H$ be a cocommutative Hopf algebra and $X$ be a coalgebra on which $H$ acts, such that the action is compatible with the comultiplication of $X$.
Suppose $X$ is a categorical augmented rack with 
$\nu: X \rightarrow H$ that is a coalgebra morphism.
Let $(Y, \tilde{\nu})$ be a categorical augmented rack constructed from $X$ and $H$ as in Theorem~\ref{thm:ind}.
Then the associated R-matrix satisfies YBE.
\end{theorem}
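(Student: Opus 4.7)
The plan is to apply Theorem~\ref{thm:main} directly to the categorical augmented rack $(Y,\tilde\nu)$ furnished by Theorem~\ref{thm:ind}. Concretely, once the hypotheses of Theorem~\ref{thm:main} are shown to hold for $Y$ --- namely that $Y$ is a cocommutative coalgebra, that the induced $H$-action is compatible with $\Delta_Y$, and that $\tilde\nu$ is a coalgebra morphism --- the R-matrix of Definition~\ref{def:R} built from the SD map associated with $\tilde\nu$ will automatically satisfy the YBE. So the work reduces to auditing these three properties on $Y = X\otimes X$, assuming they already hold on $X$.

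Cocommutativity of $Y$ is essentially free. Under $\Delta_Y=({\mathbb 1}\otimes\tau\otimes{\mathbb 1})(\Delta_X\otimes\Delta_X)$, the flip on $Y\otimes Y$ interchanges the Sweedler components of each $X$-factor independently, so cocommutativity of $X$ (tacitly part of the base case, since $X$ itself must meet the hypotheses of Theorem~\ref{thm:main}) transfers to $Y$ by inspection.

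For compatibility between the $H$-action on $Y$ and $\Delta_Y$, the idea is to expand $\Delta_Y\bigl((x_1\otimes x_2)\cdot h\bigr)$ using the defining rule $(x_1\otimes x_2)\cdot h=(x_1\cdot h^{(1)})\otimes(x_2\cdot h^{(2)})$, apply the compatibility on $X$ to pull $\Delta_X$ past the action in each factor, and compare the result against $(y^{(1)}\cdot h^{(1)})\otimes(y^{(2)}\cdot h^{(2)})$. The two sides will differ only in the order of the middle two among the four Sweedler factors of the iterated coproduct of $h$; coassociativity together with the cocommutativity of $H$ identifies them. This is precisely the step where the assumption that $H$ is cocommutative is used, and diagrammatically it amounts to sliding a crossing past a coproduct in the $H$-strand. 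I expect this to be the main obstacle, since both sides must be rewritten carefully before the rearrangement of $h$-factors becomes visible.

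The remaining point, that $\tilde\nu=\mu(\nu\otimes\nu)$ is a coalgebra morphism, follows from the bialgebra compatibility of $H$ together with the assumption that $\nu$ is itself a coalgebra morphism:
\[
\Delta_H\,\tilde\nu \;=\; (\mu\otimes\mu)({\mathbb 1}\otimes\tau\otimes{\mathbb 1})(\Delta_H\otimes\Delta_H)(\nu\otimes\nu) \;=\; (\tilde\nu\otimes\tilde\nu)\,\Delta_Y,
\]
after replacing $\Delta_H\nu$ by $(\nu\otimes\nu)\Delta_X$ on each of the two copies of $\nu$. With all three checks in hand, Theorem~\ref{thm:main} applies verbatim to $(Y,\tilde\nu)$, yielding the claimed YBE for its associated R-matrix. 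Iterating the construction of Theorem~\ref{thm:ind} then produces, starting from any input $(X,\nu)$, an infinite tower of Yang-Baxter solutions, as advertised.
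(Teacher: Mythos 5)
Your proposal is correct and follows essentially the same route as the paper: reduce to Theorem~\ref{thm:main} by checking that the action on $Y$ is compatible with $\Delta_Y$ (with the key step being the reordering of the middle Sweedler factors of $h$ via coassociativity and cocommutativity of $H$) and that $\tilde\nu=\mu(\nu\otimes\nu)$ is a coalgebra morphism via the bialgebra compatibility. Your explicit remark that cocommutativity of $X$ (hence of $Y$) must be tacitly assumed for Theorem~\ref{thm:main} to apply is a small point of added care over the paper's proof, which does not address it.
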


\begin{proof}
We apply Theorem~\ref{thm:main} to $(Y, \tilde{\nu})$. 
The compatibility between the action and the comultiplication is shown in Figure~\ref{indcompati},
which consists of definitions and the middle equality
$$
( \Delta_X \otimes \Delta_X) ( (x \cdot g^{(1)} )  \otimes ( y \cdot g^{(2)}) ) 
=
(\Delta_X(x) \cdot g^{(1)})  \otimes (\Delta_X(y) \cdot g^{(2)}) .
% [ ( x^{(1)} \cdot g^{(11)} ) \otimes (y^{(1)} \cdot g^{(12)} ) ]  \otimes [ ( x^{(2)} \cdot g^{(21)} ) \otimes (y^{(2)} \cdot g^{(22)} ) ] 
$$
%which follows from coassociativity, cocommutativity and compatibility of action with comultiplication, since 
The left  hand side of the equality is computed as 
\begin{eqnarray*}
\lefteqn{ ( \Delta_X \otimes \Delta_X) ( (x \cdot g^{(1)} )  \otimes ( y \cdot g^{(2)}) ) }\\
&=& 
[ ( x^{(1)} \cdot g^{(11)} ) \otimes (x^{(2)} \cdot g^{(12)} ) ]  \otimes [ ( y^{(1)} \cdot g^{(21)} ) \otimes 
(y^{(2)} \cdot g^{(22)} ) ] 
\end{eqnarray*}
and, using the compatibility between the action and comultiplication,  the right hand side is 
\begin{eqnarray*}
\lefteqn{  (\Delta_X(x) \cdot g^{(1)})  \otimes (\Delta_X(y) \cdot g^{(2)}) }\\
&=& [ ( x^{(1)} \cdot g^{(11)} ) \otimes (y^{(1)} \cdot g^{(12)} ) ]  \otimes [ ( x^{(2)} \cdot g^{(21)} ) \otimes (y^{(2)} \cdot g^{(22)} ) ].
\end{eqnarray*}
Then the equality holds by coassociativity and  cocommutativity.

The property of $\tilde{\nu}$ being a coalgebra morphism is depicted in Figure~\ref{indcomulti}, that follows from
the compatibility conditions of multiplication and comultiplication together with the commutativity between $\nu$ and $\Delta$ as depicted.
Thus Theorem~\ref{thm:main} applies.
\end{proof}

In particular, constructions given in Sections~\ref{sec:heap} and \ref{sec:ad} provide infinite families of YB operators through Theorem~\ref{thm:indR} inductively.

\begin{figure}[htb]
\begin{center}
\includegraphics[width=2.5in]{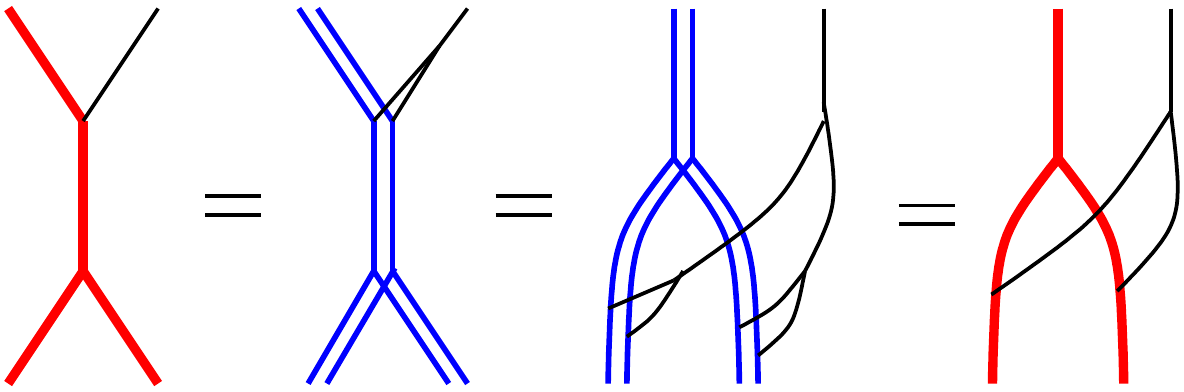}
\end{center}
\caption{Compatibility between action and comultiplication for the inductive construction}
\label{indcompati}
\end{figure}

\begin{figure}[htb]
\begin{center}
\includegraphics[width=3in]{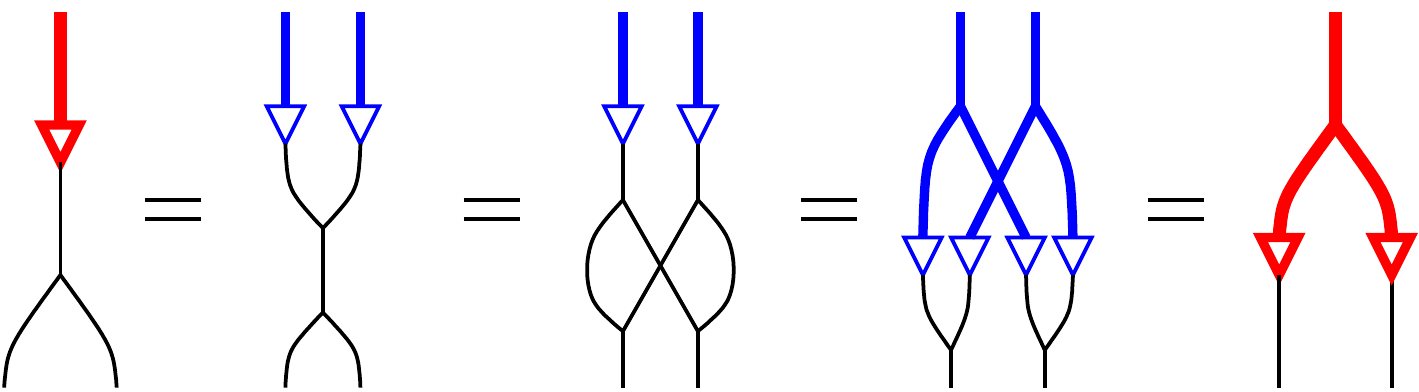}
\end{center}
\caption{Compatibility between augmentation map and comultiplication}
\label{indcomulti}
\end{figure}

\section{Construction of a braided monoidal category}\label{sec:BMC}

In this section we generalize the doubling construction of Theorem~\ref{thm:indR}
to construct a braided monoidal category generated by one object.
 
 A {\it strict monoidal category} of a category ${\cal C}$ is equipped with a bifunctor 
 $\otimes : {\cal C} \times {\cal C} \rightarrow {\cal C}$ that is associative (the associator is the identity)
 and the unitors $X \otimes I \rightarrow X$, $I \otimes X \rightarrow X$ are identities.
 A {\it braided strict monoidal category} is a strict monoidal category equipped with 
 {\it braiding} $R_{A,B} : A \otimes B \rightarrow B \otimes A$, that are natural isomorphisms, for all objects $A,B$ satisfying 
 \begin{eqnarray}
 R_{A, B \otimes C} = ({\mathbb 1} \otimes R_{A, C} ) (R_{A,B} \otimes {\mathbb 1} ) &:& 
 A \otimes B \otimes C \rightarrow B \otimes C \otimes A  , \label{BMC1} \\
 R_{A \otimes B , C} = (R_{A,C} \otimes {\mathbb 1} )  ({\mathbb 1} \otimes R_{B, C} ) &:& 
  A \otimes B \otimes C \rightarrow C \otimes A \otimes B . \label{BMC2}
  \end{eqnarray}
  The braiding satisfies the braid relation
  $$ 
( R_{B, C} \otimes {\mathbb 1}_A ) ({\mathbb 1}_B \otimes R_{A, C} ) ( R_{A,B} \otimes {\mathbb 1}_C )
=
 ({\mathbb 1}_C \otimes R_{A,B} ) ( R_{A, C} \otimes {\mathbb 1}_B )  ({\mathbb 1}_A \otimes R_{B, C} ) .
 $$
Two braided monoidal categories  are {\it braided equivalent} if 
there is a functor  of the two monoidal categories that commutes with braidings.

Let $X$ be a categorical augmentation rack over a Hopf algebra $H$ with $\nu: X \rightarrow H$ 
 as in Theorem~\ref{thm:indR}.
%Morphisms ${\rm Hom}(X,X)$ consist of ${\mathbb k}-$module homomorphisms that commute with the action of $H$, $f (x \cdot g)= f(x) \cdot g$.
%Morphisms ${\rm Hom}(X^{\otimes m}, X^{\otimes n})$ are those induced from ${\rm Hom}(X,X)$ by tensor products.

   Let ${\mathcal B}(X,H,\nu)$ be the category defined as follows.
 The objects consists of right $H$-modules with comultiplications compatible with the $H$-action, $X^{\otimes n}$ for non-negative integers, where $X^0= {\mathbb k}$ is the coefficient ring, so that ${\mathcal B}(X,H,\nu)$ is generated by a single object $X$.
  The objects consists of $X^{\otimes n}$ for non-negative integers, where $X^0= {\mathbb k}$ is the coefficient ring.
  Thus objects are right $H$-modules with comultiplications compatible with the $H$-action, 
  and ${\mathcal B}(X,H,\nu)$ is generated by a single object $X$.
 
 For each $n$, comultiplication 
 $\Delta_n: X^{\otimes n} \rightarrow X^{\otimes n} \otimes X^{\otimes n}$
 is defined by
 $$\Delta_n (x_1 \otimes  \cdots \otimes  x_n)=(x_1^{(1)} \otimes  \cdots \otimes x_n^{(1)} ) \otimes (x_1^{(2)} \otimes  \cdots \otimes x_n^{(2)} ) . $$
 This defines a coassociative comultiplication.
For each $n$, the map $\nu_n: X^{\otimes n} \rightarrow H$ is defined by 
$\nu_n:=M \circ \nu^{\otimes n}$, % $\nu_n:= \nu \circ M$,
where $M:=\mu (\mu \otimes {\mathbb 1}) \otimes (\mu \otimes {\mathbb 1}^{\otimes 2 } )
\cdots (\mu \otimes {\mathbb 1}^{\otimes (n-1) }) $. 
The braiding
$$R_{m,n}: X^{\otimes m} \otimes X^{\otimes n} \rightarrow X^{\otimes n} \otimes X^{\otimes m} $$
 is defined as in Definition~\ref{def:R} by
 $R_{m,n} (x \otimes y) = y^{(1)} \otimes q(x \otimes y^{(2)} ) $, where 
 for $y \in X^{\otimes n}$, $\Delta_n(y) = y^{(1)} \otimes y^{(2)} $ is as defined above,
 and $q$ is the SD map induced from the map $\nu_n$, 
 $q(x \otimes y) = x \cdot \nu_n(y)$ as in Definition~\ref{def:SD}, using the 
 action $(x_1 \otimes \cdots \otimes x_n) \cdot g = (x_1 \cdot g^{(1)} \otimes \cdots \otimes x_n \cdot g^{(n)} ) $.
  A diagrammatic representation of the definition of $R_{m,n}$ is depicted in Figure~\ref{fig:RmnDef}.
  
 \begin{figure}[htb]
 	\begin{center}
 		\includegraphics[width=1.2in]{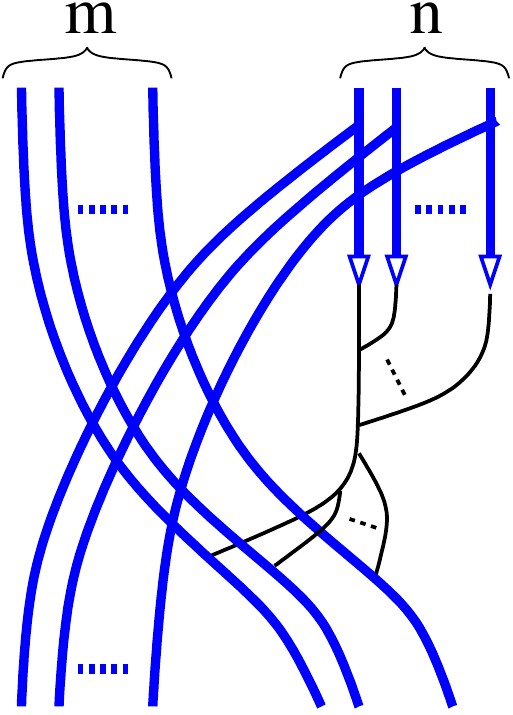}
 	\end{center}
 	\caption{Diagrammatic depiction of definition of $R_{m,n}$.}
 	\label{fig:RmnDef}
 \end{figure}

 We define the morphisms 
 ${\rm Hom}(X^{\otimes k}, X^{\otimes \ell})$  
 to consist of  the zero map  $X^{\otimes k} \rightarrow X^{\otimes \ell}$ for $k \neq \ell$,
  and for $k = \ell =n$ we set ${\rm Hom}(X^{\otimes n}, X^{\otimes n})$ to be the monoid 
 under composition generated by the maps $X^{\otimes n} \rightarrow X^{\otimes n}$ of type 
 $$\mathbb 1^{\otimes k_1}\otimes R_{s, t} \otimes \mathbb 1^{\otimes k_2}\quad {\rm and} \quad \mathbb 1^{\otimes k_1}\otimes R_{s, t}^{-1} \otimes \mathbb 1^{\otimes k_2}, $$
  such that $k_1 + s + t + k_2 = n$. Here we are implicitly assuming that taking a power zero composition of such morphisms gives the identity, so that ${\rm Hom}(X^{\otimes n}, X^{\otimes n})$ contains the identity morphism, as it should.

  \begin{theorem}
 	The category $\mathcal B(X,H,\nu)$ constructed above is indeed a braided monoidal category. Moreover, two such categories $\mathcal B(X_1 ,H, \nu_1)$ and $\mathcal B(X_2, H, \nu_2)$ are braided equivalent if $(X_1, H, \nu_1)$ and $(X_2, H, \nu_2)$ are equivalent as categorical augmented racks. When the actions of $H$ on $X_1$ and $X_2$ are faithful,
	then the converse to the preceding statement also holds. 
 	
 \end{theorem}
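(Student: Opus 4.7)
The plan is to verify the three assertions in sequence: first that $\mathcal{B}(X,H,\nu)$ is a braided monoidal category, then that an isomorphism of categorical augmented racks yields a braided equivalence, and finally that faithfulness of the $H$-action lets one reverse this implication.

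For the first assertion, the tensor product $X^{\otimes m}\otimes X^{\otimes n}:=X^{\otimes(m+n)}$ together with the unit $X^{\otimes 0}=\mathbb{k}$ is a strict bifunctor on the given Hom-sets. By induction on $n$, extending the $n=2$ case of Theorem~\ref{thm:indR} (see Figures~\ref{indcompati} and \ref{indcomulti}), the comultiplications $\Delta_n$ and the $H$-actions on $X^{\otimes n}$ are mutually compatible, and each $\nu_n=M\circ\nu^{\otimes n}$ is a coalgebra morphism; the inductive step uses only bialgebra compatibility and associativity of $\mu$. The hexagon identities (\ref{BMC1}) and (\ref{BMC2}) then reduce, after unfolding the definition of $R_{m,n}$, to the multiplicative identity $\nu_{p+q}(y\otimes z)=\nu_p(y)\nu_q(z)$, which is immediate from the recursive definition of $\nu_n$ together with associativity of $\mu$; the braid relation for $R_{n,n}$ follows formally from the hexagons, and naturality is automatic because every nonidentity morphism in $\mathcal{B}$ is built out of $R^{\pm 1}$'s. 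Invertibility of $R_{m,n}$ is the main technical point: I would set
\[
R_{m,n}^{-1}(y\otimes x):=\bigl(x\cdot S(\nu_n(y^{(2)}))\bigr)\otimes y^{(1)}
\]
and verify both composites equal the identity via the antipode condition on $H$, cocommutativity of $X$, and the coalgebra-morphism property of $\nu_n$.

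For the forward direction of the equivalence, given an isomorphism $f:(X_1,H,\nu_1)\to(X_2,H,\nu_2)$ of categorical augmented racks, I would define a strict braided monoidal functor $F:\mathcal{B}(X_1,H,\nu_1)\to\mathcal{B}(X_2,H,\nu_2)$ by $F(X_1^{\otimes n}):=X_2^{\otimes n}$ on objects and by sending each generator $\mathbb{1}^{\otimes k_1}\otimes R^{(1)}_{s,t}\otimes\mathbb{1}^{\otimes k_2}$ to the corresponding word with $R^{(2)}_{s,t}$. Well-definedness reduces to checking that every relation holding among the $R^{(1)}_{s,t}$'s in $\mathcal{B}(X_1,H,\nu_1)$ also holds among the $R^{(2)}_{s,t}$'s: this follows because $f^{\otimes n}$ intertwines the $H$-action, comultiplication, and augmentation map (using $H$-equivariance of $f$, its coalgebra-morphism property, and $\nu_2\circ f=\nu_1$), hence intertwines $R^{(1)}_{s,t}$ with $R^{(2)}_{s,t}$. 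The resulting $F$ is then strict monoidal, braided, and an equivalence, with quasi-inverse built from $f^{-1}$.

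For the converse, suppose $F:\mathcal{B}(X_1,H,\nu_1)\to\mathcal{B}(X_2,H,\nu_2)$ is a braided monoidal equivalence. Essential surjectivity, together with the fact that the isomorphism class of $X_2^{\otimes n}$ in $\mathcal{B}_2$ contains no other object (since every generating morphism preserves tensor length), forces $F(X_1)=X_2$. Since $F$ is a monoidal equivalence between categories whose objects carry intrinsic $H$-module coalgebra structure, the identification $F(X_1)=X_2$ supplies an isomorphism $f:X_1\to X_2$ of $H$-module coalgebras. Applying $F$ to the braiding identity $R^{(1)}_{1,1}(x\otimes y)=y^{(1)}\otimes\bigl(x\cdot\nu_1(y^{(2)})\bigr)$ and comparing with $R^{(2)}_{1,1}\bigl(f(x)\otimes f(y)\bigr)$, and using the coalgebra-morphism property of $f$ to rewrite $\Delta_{X_2}(f(y))=(f\otimes f)\Delta_{X_1}(y)$, one obtains
\[
f(x)\cdot\nu_2(f(y^{(2)}))=f(x)\cdot\nu_1(y^{(2)})
\]
for all $x,y\in X_1$. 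Varying $x$ and invoking faithfulness of the $H$-action on $X_2$ yields $\nu_2(f(y^{(2)}))=\nu_1(y^{(2)})$; applying the counit to the second tensor factor (together with coassociativity) then gives $\nu_2\circ f=\nu_1$, so $f$ is an isomorphism of categorical augmented racks.

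The hard part will be the converse, specifically the step that recovers the morphism $f$ from $F$. Because the Hom-sets of $\mathcal{B}$ are very sparse (only composites of $R^{\pm 1}$'s with identities), a braided monoidal functor does not produce morphisms $X_1\to X_2$ internally; one must instead use the monoidal coherence of $F$ together with the intrinsic $H$-module coalgebra structure carried by the objects to extract $f$. Faithfulness of the action is essential in the final step to promote an equality of $H$-actions on $X_2$ into an equality of elements of $H$, which is precisely the content of $\nu_2\circ f=\nu_1$.
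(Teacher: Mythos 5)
Your proposal is correct and follows essentially the same route as the paper: the same explicit inverse $R_{m,n}^{-1}$ built from the antipode applied to $\nu_n$, the same reduction of the hexagons to multiplicativity of the $\nu_n$'s, the same decomposition of $R_{m,n}$ into $R_{1,1}$'s to handle naturality, the same intertwining functor for the forward direction, and the same counit-plus-faithfulness argument for the converse. The only looseness is your remark that the braid relation ``follows formally from the hexagons'' while naturality is ``automatic'' --- these two claims feed into each other, and the non-circular source of the braid relation for $R_{1,1}$ is Theorem~\ref{thm:main}, which is exactly what the paper invokes.
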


 \begin{proof}

Before proving the properties that define a braided monoidal category, there are some well posedeness assumptions made in the definition of $\mathcal B(X,H,\nu)$ above that need to be verified. Namely, we need to check that the braidings 
$R_{m,n}$ 
are invertible, and that the tensor product of two morphisms in 
${\rm Hom}(X^{\otimes k},X^{\otimes \ell})$ 
is a morphism as well. To show the first fact, i.e. the invertibility of the braidings, let us define 
$R^{-1}_{n,m} :   X^{\otimes n} \otimes X^{ \otimes m} 
\rightarrow  X^{ \otimes m} \otimes X^{\otimes n} $. 
 through the assignment on simple tensors given by
$$
		x_1\otimes \cdots \otimes x_n \Motimes y_1 \otimes \cdots \otimes y_m \mapsto q^{-1}(y_1\otimes \cdots \otimes y_m \Motimes x_1^{(2)}\otimes \cdots \otimes x_n^{(2)}) \Motimes x_1^{(1)}\otimes \cdots \otimes x_n^{(1)},
$$ 
where
$$
	q^{-1}(x_1\otimes \cdots \otimes x_n \Motimes y_1\otimes \cdots \otimes y_m) := (x_1\otimes \cdots \otimes x_n)\cdot S(\nu_m(y_1\otimes \cdots \otimes y_m)).
$$
We now show that $R^{-1}_{n,m}$ is an inverse for $R_{m,n}$. On simple tensors we have
\begin{eqnarray*}
\lefteqn{R^{-1}_{n,m}R_{m,n}(x_1\otimes \cdots \otimes x_m \Motimes y_1\otimes \cdots \otimes y_n)}\\
 &=& R^{-1}_{n,m}(y_1^{(1)}\otimes \cdots \otimes y_n^{(1)}\Motimes x_1\otimes \cdots \otimes x_m)\cdot \nu_m(y_1^{(2)}\otimes \cdots \otimes y_n^{(2)}))\\
 &=& R^{-1}_{n,m}(y_1^{(1)}\otimes \cdots \otimes y_n^{(1)}\Motimes x_1\otimes \cdots \otimes x_m \cdot \nu(y_1^{(2)})\cdots \nu(y_n^{(2)}))\\
 &=& [(x_1\otimes \cdots \otimes x_m
 \cdot \nu(y_1^{(1)})\cdots \nu(y_n^{(1)})]\cdot S(\nu_n(y_1^{(2)}\otimes \cdots \otimes y_n^{(2)})) \Motimes y_1^{(3)}\otimes \cdots \otimes y_n^{(3)}\\
 &=& [(x_1\otimes \cdots \otimes x_m)\cdot \nu(y_1^{(1)})\cdots \nu(y_n^{(1)})]\cdot S(\nu(y_1^{(2)})\cdots \nu(y_n^{(2)}))\Motimes y_1^{(3)}\otimes \cdots \otimes y_n^{(3)}\\
 &=& (x_1\otimes \cdots \otimes x_m)\cdot [\nu(y_1^{(1)})\cdots \nu(y_n^{(1)})\cdot S(\nu(y_1^{(2)})\cdots \nu(y_n^{(2)}))] \Motimes y_1^{(3)}\otimes \cdots \otimes y_n^{(3)}\\
 &=& (x_1\otimes \cdots \otimes x_m)\cdot [\nu(y_1^{(1)})\cdots \nu(y_n^{(1)})\cdot S(\nu(y_n^{(2)}))\cdots S(\nu(y_1^{(2)}))]\Motimes y_1^{(3)}\otimes \cdots \otimes y_n^{(3)}\\
 &=& x_1\otimes \cdots \otimes x_m \Motimes [ \epsilon(y_1^{(1)})\cdots \epsilon(y_n^{(1)}) ] y_1^{(2)}\otimes \cdots \otimes y_n^{(2)}\\
 &=& x_1\otimes \cdots \otimes x_m \Motimes y_1\otimes \cdots \otimes y_n.
\end{eqnarray*}

%\begin{eqnarray*}
%\lefteqn{R^{-1}_{n,m}R_{n,m}(x_1\otimes \cdots \otimes x_n \Motimes y_1\otimes \cdots \otimes y_m)}\\
% &=& R^{-1}_{n,m}(y_1^{(1)}\otimes \cdots \otimes y_m^{(1)}\Motimes x_1\otimes \cdots \otimes x_n)\cdot \nu_m(y_1^{(2)}\otimes \cdots \otimes y_m^{(2)}))\\
% &=& R^{-1}_{n,m}(y_1^{(1)}\otimes \cdots \otimes y_m^{(1)}\Motimes x_1\otimes \cdots \otimes x_n)\cdot \nu(y_1^{(2)})\cdots \nu(y_m^{(2)}))\\
% &=& [(x_1\otimes \cdots \otimes x_n)\cdot \nu(y_1^{(1)})\cdots \nu(y_m^{(1)})]\cdot S(\nu_n(y_1^{(2)}\otimes \cdots \otimes y_m^{(2)})) \Motimes y_1^{(3)}\otimes \cdots \otimes y_m^{(3)}\\
% &=& [(x_1\otimes \cdots \otimes x_n)\cdot \nu(y_1^{(1)})\cdots \nu(y_m^{(1)})]\cdot S(\nu(y_1^{(2)})\cdots \nu(y_m^{(2)}))\Motimes y_1^{(3)}\otimes \cdots \otimes y_m^{(3)}\\
% &=& (x_1\otimes \cdots \otimes x_n)\cdot [\nu(y_1^{(1)})\cdots \nu(y_m^{(1)})\cdot S(\nu(y_1^{(2)})\cdots \nu(y_m^{(2)}))] \Motimes y_1^{(3)}\otimes \cdots \otimes y_m^{(3)}\\
% &=& (x_1\otimes \cdots \otimes x_n)\cdot [\nu(y_1^{(1)})\cdots \nu(y_m^{(1)})\cdot S(\nu(y_m^{(2)}))\cdots S(\nu(y_1^{(2)}))]\Motimes y_1^{(3)}\otimes \cdots \otimes y_m^{(3)}\\
% &=& x_1\otimes \cdots \otimes x_n \Motimes [ \epsilon(y_1^{(1)})\cdots \epsilon(y_m^{(1)}) ] y_1^{(2)}\otimes \cdots \otimes y_m^{(2)}\\
% &=& x_1\otimes \cdots \otimes x_n \Motimes y_1\otimes \cdots \otimes y_m.
%\end{eqnarray*}
A similar approach also shows that 
$R_{n,m}R^{-1}_{n,m} = {\mathbb 1}_{X^{\otimes (m+n)} } $. 
Therefore, the braidings $R_{m,n}$ that we have defined are invertible. 

We now show that taking tensor products is well defined within the category. For objects this is obvious. Let us consider two morphisms. First, if we take the tensor product of morphisms where at least one of the tensorand is an element of 
${\rm Hom}(X^{\otimes k},X^{\otimes \ell})$ for $k \neq \ell$, we obtain the zero morphism, which is always a morphism in the category. 
Consider a tensor product of type $(f_u\circ \cdots \circ  f_1) \otimes (g_v\circ \cdots \circ g_1)$, where all $f_i$ and $g_j$ are morphisms of type 
$\mathbb 1^{\otimes k_1}\otimes R^{\pm}_{s,t} \otimes \mathbb 1^{\otimes k_2}$, 
where $f_i : X^{\otimes m} \rightarrow X^{\otimes m}$ and $g_j : X^{\otimes n} \rightarrow X^{\otimes n}$. We can write this tensor product as
\begin{eqnarray*}
		(f_u\circ \cdots f_1) \otimes (g_v\circ \cdots \circ g_1) &=& (g_v\otimes \mathbb 1^{\otimes m})\circ \cdots \circ (g_1\otimes \mathbb 1^{\otimes m}) \circ (\mathbb 1^{\otimes n}\otimes f_u) \circ \cdots \circ (\mathbb 1^{\otimes n}\otimes f_u),
\end{eqnarray*}
which is again a composition of morphisms of the same type $\mathbb 1^{\otimes k_1}\otimes R^{\pm}_{s, t} \otimes \mathbb 1^{\otimes k_2}$. This shows that taking tensor products in the category is well defined. 

Next, we show that the braiding defined above satisfies 
the axioms of the braided monoidal category, Equation (\ref{BMC1}) and (\ref{BMC2}).
In Equation (\ref{BMC1}), let $A=X^{\otimes \ell  }$, $B=X^{ \otimes m}$ and $C=X^{\otimes n}$.
For $x \in A$, $y \in B$ and $z \in C$, the left and right hand sides of (\ref{BMC1}) 
are computed, respectively,
\begin{eqnarray*}
R_{A, B \otimes C}(x \otimes (y \otimes z)) 
&=&  
( y \otimes z)^{(1)} \otimes x \cdot \nu_{m+n} ( (y \otimes z)^{(2)} ) \\
&=& 
( y^{(1)} \otimes z^{(1)} ) \otimes ( x \cdot \nu_m(y^{(2)})  \nu_n (z^{(2)} ) ), \\
({\mathbb 1} \otimes R_{A,  C} )( R_{A,B} \otimes {\mathbb 1} ) ( ( x \otimes y) \otimes z)
&=&
({\mathbb 1} \otimes R_{A,  C} ) ( y^{(1)} \otimes ( x \cdot \nu_m( y^{(2)} ) \otimes z) )\\
&=& 
y^{(1)} \otimes z^{(1)} \otimes ( x \cdot \nu_m (y^{(2)})  \nu_n (z^{(2)} ) ),
\end{eqnarray*}
and Equation (\ref{BMC1}) follows. Equation  (\ref{BMC2}) is similar.

To prove naturality of braidings, we
observe that if we show that the any $R_{m,n}$ can be decomposed in appropriate product of braidings of type $R_{1,1}$ (tensored by an appropriate number of identity morphisms), then the result would follow by decomposing the $R_{m,n}$ and then using the braid relation for $R_{1,1}$ (proved in Theorem~\ref{thm:main}) several times. We show such a decomposition for $R_{2,2}$ for notational simplicity, since the generalization to arbitrary $m,n$ is substantially the same process but with more cumbersome notation. We have
\begin{eqnarray*}
		R_{2,2}(x_1\otimes x_2\Motimes y_1\otimes y_2) &=& y_1^{(1)} \otimes y_2^{(1)} \Motimes (x_1\otimes x_2)\cdot (\nu(y_1^{(2)})\nu(y_2^{(2)}))\\
		&=& y_1^{(1)} \otimes y_2^{(1)} \Motimes x_1\cdot (\nu(y_1^{(2)})\nu(y_2^{(2)})) \otimes x_2\cdot (\nu(y_1^{(3)})\nu(y_2^{(3)}))
\end{eqnarray*}
where we have used the compatibility of $\nu$ and comultiplication $\Delta$. Also, we have
\begin{eqnarray*}
		\lefteqn{(\mathbb 1\otimes R_{1,1}\otimes \mathbb 1)(\mathbb 1^{\otimes 2} \otimes R_{1,1})(R_{1,1}\otimes \mathbb 1^{\otimes 2})(\mathbb 1\otimes R_{1,1}\otimes \mathbb 1)(x_1\otimes x_2\Motimes y_1\otimes y_2)}\\
		 &=& (\mathbb 1\otimes R_{1,1}\otimes \mathbb 1)(\mathbb 1^{\otimes 2} \otimes R_{1,1})(R_{1,1}\otimes \mathbb 1^{\otimes 2})(x_1\otimes y_1^{(1)} \Motimes x_2\cdot \nu(y_1^{(2)}) \otimes y_2)\\
		 &=& (\mathbb 1\otimes R_{1,1}\otimes \mathbb 1)(\mathbb 1^{\otimes 2} \otimes R_{1,1})(y_1^{(1)}\otimes x_1\cdot \nu(y_1^{(2)})\Motimes x_2\cdot \nu(y_1^{(3)}) \otimes y_2)\\
		 &=& (\mathbb 1\otimes R_{1,1}\otimes \mathbb 1)(y_1^{(1)}\otimes x_1\cdot \nu(y_1^{(2)})\Motimes y_2^{(1)}\otimes x_2\cdot \nu(y_1^{(3)})\nu(y_2^{(2)}))\\
		 &=& y_1^{(1)} \otimes y_2^{(1)} \Motimes x_1\cdot \nu(y_1^{(2)})\nu(y_2^{(2)}) \otimes x_2\cdot \nu(y_1^{(3)})\nu(y_2^{(3)}).
\end{eqnarray*}
Therefore, we have shown that $R_{2,2}$ is a composition of $R_{1,1}$ 
 tensored with identities.
 Similarly  it is shown that 
  any $R_{m,n}$ can be decomposed as a composition of $R_{1,1}$ (tensored with  identities) by taking the permutation of $(m+n)$ elements that exchanges the first $m$ and the last $n$, decomposing it into product of transpositions, and then identifying transpositions and morphisms of type $R_{1,1}$ tensored with identities.
  A diagrammatic depiction of this decomposition is given in Figure~\ref{fig:Rmn}.
 
 \begin{figure}[htb]
 	\begin{center}
 		\includegraphics[width=1.3in]{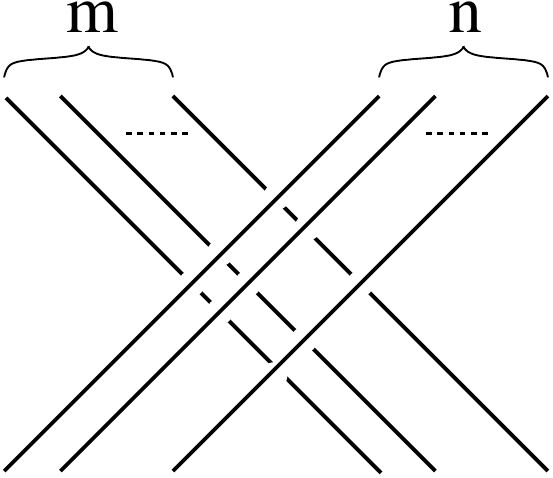}
 	\end{center}
 	\caption{Diagrammatic depiction of $R_{m,n}$, where each crossing of two strands corresponds to the R-matrix of Theorem~\ref{thm:main}}
 	\label{fig:Rmn}
 \end{figure}

We now show the naturality of the braiding. This accounts to showing that morphisms and braidings commute. However, from the
result of the preceding paragraph, by decomposing all the morphisms and the braidings into compositions of $R_{1,1}$ tensor the identity, we find that naturality is the same as the braid relation for $R_{1,1}$, which was proved in Theorem~\ref{thm:main}. 

We now proceed to proving the second part of the statement. Suppose first that $(X_1, H, \nu_1)$ and $(X_2, H,\nu_2)$ are equivalent categorical augmented racks. Then 
there exists 
 an isomorphism $f$ between $(X_1,H,\nu_1)$ and $(X_2,H,\nu_2)$. Observe that by the definition of isomorphism between categorical augmented racks, it follows that the associated SD structures are isomorphic. In particular, $f$ induces a morphism between the objects $X_1$ and $X_2$ of $\mathcal B(X_1, H, \nu_1)$ and $\mathcal B(X_2, H, \nu_2)$. 

Let us define a functor 
$F$ between $\mathcal B(X_1,H,\nu_1)$ and $\mathcal B(X_2, H, \nu_2)$. We set 
$F(X_1) = X_2$, and for all $n \in \mathbb N$ with $n\geq 2$ we define $F(X_1^{\otimes n}) = X_2^{\otimes n}$. When $n=0$ we set $F(\mathbb 1_{\mathbb k}) = \mathbb 1_{\mathbb k}$, the identity map over the one dimensional module $\mathbb k$. For a morphism $\phi \in {\rm Hom}(X_1^{\otimes n}, X_1^{\otimes n})$ we define $F(\phi)$ as follows. 
Denote the braiding in $\mathcal B(X_i, H, \nu_i)$ by $R^{(i)}$ for $i=1,2$. 
For the braiding $R^{(1)}: X_1 \otimes X_1 \rightarrow  X_1 \otimes X_1$,
 define 
$F(R^{(1)})= (f\otimes f)\circ R^{(1)} \circ (f^{-1}\otimes f^{-1}).$
Since $R^{(i)} (x \otimes y)=y^{(1)} \otimes ( x \cdot \nu_i (y^{(2)} )$, for $u\otimes  v=f(x) \otimes f(y)$ we have that 
\begin{eqnarray*}
 F(R^{(1)}) (u \otimes v) &=&
[ (f\otimes f)\circ R^{(1)} \circ (f^{-1}\otimes f^{-1}) ] (u \otimes v) \\
 &=&
[ (f\otimes f)\circ R^{(1)}] ( f^{-1} (u) \otimes f^{-1} (v) ) \\
&=&
(f \otimes f ) ( y^{(1)} \otimes ( x \cdot \nu_1(y^{(2)} ) ) \\
& = &
 v^{(1)} \otimes ( u \cdot \nu_2( v^{(2)} ) \\
&=& R^{(2)} (u \otimes v) ,
\end{eqnarray*}
so that $F (R^{(1)})=R^{(2)}$ as desired. 
If $\phi$ is a braiding
 $R_{s,t}^{(1)} : X^{\otimes s}\otimes X^{\otimes t} \rightarrow X^{\otimes t} \otimes X^{\otimes s}$ we set 
$$F(\phi) = F(R_{s,t}^{(1)} ) = (f^{\otimes t}\otimes f^{\otimes s})\circ R^{(1)}_{s,t}\circ ((f^{-1})^{\otimes s}\otimes (f^{-1})^{\otimes t}).$$
By computations similar to the case of $F (R^{(1)})=R^{(2)}$, we have that 
$F (R^{(1)}_{s,t})=R^{(2)}_{s,t}$.

If $\phi$ is a composition of morphisms of type 
$ {\mathbb 1}^{\otimes k_1} \otimes  R_{s,t}^{(1)}  \otimes  {\mathbb 1}^{\otimes k_2}$,
we define $F(\phi)$ as the composition of morphisms of type 
$ {\mathbb 1}^{\otimes k_1} \otimes  F(R_{s,t}^{(1)} )  \otimes  {\mathbb 1}^{\otimes k_2}$. 
From the definitions, we have
that the functor $F$ is a braided monoidal functor. By defining $F^{-1}$ analogously but by replacing $f$ by $f^{-1}$, we can construct an braided monoidal functor between $\mathcal B(X_2, H, \nu_2)$ and $\mathcal B(X_1, H, \nu_1)$ which is the inverse of $F$. This gives that the categories $\mathcal B(X_1 , H, \nu_1)$ and $\mathcal B(X_2, H, \nu_2)$ are  braided
 equivalent. 
 
 Now assume that the $H$-actions are faithful. Suppose that we have a braided monoidal equivalence $F$ between the two categories. Then there exists an isomoprhism between $X_1$ and $X_2$. Since $\mathcal B(X_1, H, \nu_1)$ and $\mathcal B(X_2, H, \nu_2)$ are categories of $H$-modules, the isomoprhism $f : X_1 \rightarrow X_2$ is $H$-equivariant. 
  Since
  $f$ commutes with the braidings, 
  we have that 
$$(f\otimes f)\circ R^{(1)} (x\otimes y) = R^{(2)} (f(x)\otimes f(y)).$$
For the first term we compute
\begin{eqnarray*}
		(f\otimes f)\circ R^{(1)} (x\otimes y) &=& (f\otimes f)(y^{(1)}\otimes q_1(x\otimes y^{(2)}))\\
		&=& (f\otimes f) (y^{(1)}\otimes x\cdot \nu_1(y^{(2)}))
\end{eqnarray*}
while for the second term we compute
\begin{eqnarray*}
			R^{(2)} (f(x)\otimes f(y)) &=& f(y^{(1)}) \otimes q_2(f(x)\otimes f(y^{(2)}))\\
			&=& f(y^{(1)})\otimes f(x)\cdot \nu_2(f(y^{(2)})),
\end{eqnarray*}
where we have used the fact that $f$ is a morphism of coalgebras. 
Applying $\epsilon_2\otimes \mathbb 1$ (where $\epsilon_2$ is the counit of $X_2$) to the equality $(f\otimes f)\circ R^{(1)} (x\otimes y) = R_{1,1}^{(2)} (f(x)\otimes f(y))$ we find that
$$
		f(x\cdot \nu_1(y)) = f(x) \cdot \nu_2(f(y)). 
$$
Using the fact that $f$ is $H$-equivariant and that $H$ acts on $X_2$ faithfully, we find that $\nu_1(y) = \nu_2(f(y))$. This shows that $f$ is a morphism of augmented racks. Proceeding similarly but for $f^{-1
}$ we see that $f$ is invertible through a morphism of augmented racks, completing the proof. 
 \end{proof}

%   \bigskip        	
%
%\noindent
%{\bf Acknowledgements.} 

\end{document}